\newtheorem{theorem}{Theorem}[section]
\newtheorem{lemma}[theorem]{Lemma}
\newtheorem{definition}{Definition}[section]
\newtheorem*{remark}{Remark}
\newcommand{\xtu}{x_t^u}
\newcommand{\R}{\mathbb{R}}
\newcommand{\ff}{\Phi}
\newcommand{\cpd}{\text{strictly conditionally positive definite}}
\newcommand{\pindt}{\hspace{\parindent}}
\newcommand{\Dt}{\Delta t}
\newcommand{\Dx}{\Delta x}
\newcommand{\hEkn}{\hat{\mathbb{E}}_{t_n}^{x_k}}
\newcommand{\Exn}{\mathbb{E}_{t_n}^{x}}
\newcommand{\hE}{\hat{\mathbb{E}}}
\newcommand{\Or}{\mathcal{O}}
\newcommand{\bp}{\boldsymbol{p}}
\newcommand{\bq}{\boldsymbol{q}}
\newcommand{\bR}{\boldsymbol{R}}
\newcommand{\bmu}{\boldsymbol{\mu}}
\newcommand{\bnu}{\boldsymbol{\nu}}
\newcommand{\oo}{\mathcal{O}}
\newcommand{\E}{\mathbb{E}}
\date{}
\begin{document}

\title{Meshfree Approximation for Stochastic Optimal Control Problems}
\author{
Hui Sun,\thanks{Department of Mathematics, Florida State University, Tallahassee, Florida}  
\ Feng Bao \thanks{Department of Mathematics, Florida State University, Tallahassee, Florida, (bao@math.fsu.edu)}}
\maketitle
\begin{abstract}
In this work, we study the gradient projection method for solving a class of stochastic control problems by using a mesh free approximation approach to implement spatial dimension approximation. Our main contribution is to extend the existing gradient projection method to moderate high-dimensional space. The moving least square method and the general radial basis function interpolation method are introduced as showcase methods to demonstrate our computational framework, and rigorous numerical analysis is provided to prove the convergence of our meshfree approximation approach. We also present several numerical experiments to validate the theoretical results of our approach and demonstrate the performance meshfree approximation in solving stochastic optimal control problems.
\end{abstract}

\textbf{keywords:} stochastic optimal control, maximum principle, backward stochastic differential equations, meshfree approximation

\section{Introduction}
The stochastic optimal control problem is a very important research topic in both mathematical and engineering communities. There is a large number of literatures contributing theoretical foundations to the optimal control theories (\cite{wfms}, \cite{ntouzi}, \cite{ab1}, \cite{ab2}), while others are presented with more emphasis on applications (\cite{rcarmona}, \cite{hpam}, \cite{xyz}, \cite{wzhao }). Recently, the control theory found its new application in machine learning \cite{DL_control, Bao_DL}, which reveals even broader application scenarios for optimal control.

In most practically applications, finding an optimal control with closed form is difficult -- expect for some limited cases such like the linear-quadratic control problem, and people often need to obtain numerical solutions.
There are typically two types of numerical methods to solve the stochastic optimal control problem:  the dynamic programming and the stochastic maximum principle (SMP) \cite{p3}.
The dynamic programming approach aims to transfer the control problem to numerical solutions for a class of nonlinear PDEs (Hamilton-Jacobi-Bellman, i.e. HJB equations), whose solutions can be interpreted in the viscosity sense (\cite{wfms}, \cite{ntouzi}, \cite{hpam}). There are several successful numerical schemes designed to solve the HJB equation numerically (\cite{barles1},\cite{barles2}, \cite{jiewei}, \cite{weie}). The SMP approach, on the other hand, introduces an optimality condition for the optimal control. Then, the optimal control can be determined by an optimization procedure.  In order to solve the stochastic optimal control problem through SMP, a system of backward stochastic differential equations (BSDEs) is derived as the adjoint process of the controlled state, and hence obtaining numerical solutions for BSDEs is required.

In this paper, we focus on the SMP approach due to its advantages over the dynamic programming approach in two-folds: (i) SMP allows to have some state constraints; (ii) SMP allows to have random coefficients in the state equation and in the cost functional.
The general computational framework that we adopt is the gradient projection method \cite{main}, in which numerical schemes for BSDEs are used to calculate the gradient process of the cost functional, and the gradient projection optimization is applied to determine the optimal control (\cite{p3}, \cite{rcarmona}).

A major challenging in the existing gradient projection method for solving the stochastic optimal control problem lies in the spatial dimension approximation, which occurs in approximating conditional expectations for solutions of BSDEs. In low dimensional state spaces, one may compute  values of conditional expectation at tensor-product grid points, and then use polynomial interpolation to approximate the entire conditional expectation. However, it is well-known that tensor-product grid points with polynomial interpolation suffer from the curse of dimensionality, hence the computational cost increases exponentially as the dimension increases. To address the curse of dimensionality, and to improve the efficiency of the current gradient projection method, in this work we introduce meshfree approximation methods to implement spatial dimension approximation. 

The meshfree approximation methods typically avoid structured mesh grid points, which are required for polynomial interpolation. Therefore, meshfree methods are more flexible in embedding function features in the approximation for solving differential equations. Instead of using polynomial basis, meshfree approximation usually choose radial basis functions (RBFs) to calculate approximations \cite{fass}. In this paper, we first choose the moving least square method as our RBF method due to its efficiency and flexibility \cite{wen}. Then, we introduce general RBFs approximation and apply it to implement spatial dimension approximation in our computational framework. As a theoretical validation for our method, we shall carry out rigorous numerical analysis to prove the convergence of our meshfree method. Specifically, we shall systematically study the errors of meshfree approximation. Then, we incorporate spatial approximation errors caused by meshfree approximation into classical numerical analysis for stochastic optimal control and derive corresponding convergence results. To validate numerical effectiveness and efficiency, we shall carry out several numerical experiments to demonstrate the theoretical findings. 

The rest of this paper is organized as following. In Section \ref{GPM}, we give a brief introduction to the gradient projection method. In Section \ref{Algorithm}, we introduce a numerical algorithm that implements our meshfree approximation method to solve stochastic optimal control problems. Numerical analysis for our meshfree approximation method will be discussed in Section \ref{Analysis}, and we shall present numerical results in Section \ref{Numerics}.

\section{The gradient projection method for optimal control}\label{GPM}
\pindt Let $(\Omega, \mathcal{F}, \lbrace  \mathcal{F} \rbrace_{0 \leq t \leq T}, \mathbb{P} )$ be a filtered probability space of the multidimensional brownian motion $\boldsymbol{W}=(W_t)_{0 \leq t \leq T}$ in $\R^m$. Let ${\bf C}$ be a nonempty convex closed subset in $\R^{d_1}$, and we define the control set $U$ as following
\[
U = \lbrace u \in L^2([0,T];\R^{d_1}) \ | \ u(t) \in {\bf C} \ a.e. \rbrace.
\]
Consider the following dynamics of the state variable $x_t \in \R^d$, which is controlled by some process $u_t \in U$, 

\label{sde}
\begin{equation}
	dx_t^u=b(\xtu, u_t) dt  + \sigma(\xtu,u_t)dW_t.
\end{equation}
In this work, we assume that the control process is deterministic, 
and the coefficients $ (b,\sigma): \R^d \times U \rightarrow \R^d \times \R^{d \times m}$
are assumed to be \textit{continuously differentiable with bounded derivatives.}

The cost functional is defined as follows:
\begin{equation}
	J(u)=\E[\int_0^T j(x^u_t,u_t)dt + k(x^u_T)]
\end{equation}
where $j$ is the running cost $j : \R^d \times \R^{d_1} \rightarrow \R$ and  $k$ is the terminal cost $k : \R^d \rightarrow \R$. 
We assume that $j$ is continuously differentiable and the derivatives have at most linear growth in the underlying variables. 

The stochastic optimal control problem that we want to solve in this work is defined by
\begin{equation}
	\text{Find } u^* \in U  \text{ such that }  J(u^*) = \min_{u \in U} J(u).
\end{equation}
To find the optimal control $u^*$, we introduce a gradient projection method, which is based on the following formula
\begin{equation}{\label{cont_projection}}
	u_t^*=P_U(u_t^* - \rho J'(u^*)|_t), 
\end{equation}
where $P_U$ is the projection on to the subspace $U \subset L^2([0,T];\R^{d_1})$. 

Introducing the process $(\bp_t, \bq_t)$, the gradient $J'(u)$ can be derived through taking directional derivative (\cite{p2}, \cite{p3}, \cite{rcarmona}, \cite{xyz}),
\begin{align}
	J'(u) = \lim_{\epsilon\rightarrow 0 } \frac{J(u+\epsilon(\tilde{u}-u))-J(u) }{\epsilon}
\end{align}
  and the expression of $J'(u)$ can be found to be  
\begin{equation} \label{cont_grad}
	J'(u)|_t=\E[(\bp_t)^\dagger b'_u+ tr((\bq_t)^\dagger \sigma'_u)+j'_u(u(t))]
\end{equation}
where $(\cdot)^{\dagger}$ stands for the transpose operator, and the stochastic processes $ (x^u_t, \bp_t^u, \bq_t^u)$ in \eqref{cont_grad} are solutions of the following forward backward stochastic differential equations (FBSDEs) system
\begin{align} 
dx_t&=b(x_t,u)dt +\sigma(x_t,u)dW_t , \qquad &x_{t=0}=x_0 {\label{SDE}}\\
-d\bp_t&=\big(D_x j(x,u) + D_xb(x,u)  \bp_t + tr(\bq_t^{\dagger}D_x\sigma(x,u)\big) dt - \bq_t dW_t , \ & \bp_T=D_x k(x_T) {\label{BSDE}}
\end{align}
where $D_x$ stands for partial differentiation with respect to the space variables, and we use bold face letters $\bp$ and $\bq$ to emphasis high dimensionality of the solutions of the BSDE \eqref{BSDE}. 
For notational convenience, we denote
\begin{equation*}
	f(x,\bp,\bq,u):=D_x j(x,u) + D_xb(x,u)  \bp + tr(\bq^{\dagger}D_x\sigma(x,u)).
\end{equation*}

As a result, finding the optimal control hinges on solving \eqref{cont_projection}, which can be achieved by using an iterative scheme.
In the next section we will design a numerical algorithm to find the optimal control.

\section{Numerical algorithm for the projection method}\label{Algorithm}
To proceed, we first define a discretized control space, and then introduce the discretization for the system of FBSDEs. We will also develop an efficient meshfree approximation based spatial discretization scheme, which is a challenging task when solving the optimal control problem. 
\subsection{Temporal discretization for the optimal control}
For a positive integer $N$, we introduce the following temporal discretization over $[0, T]$
$$
0=t_0 < t_1 < \cdot \cdot \cdot < t_N=T, \ \ t_{n+1}-t_{n}=\Dt =\frac{T}{N}, \ \ I_n^N:=[t_{n-1},t_{n}).
$$
For control processes, we define a discretized control space as a subspace of $U$, i.e.
$$
U_N= \Big \{ u \in U \ | \ u=\sum_{m=1}^N \alpha_m  \chi_{I^N_m} ,\ a.e  ,\ \alpha_m \in \R^{d_1} \Big \},
$$
and the optimal control problem on $U_N$ becomes
$$
J(u^{*,N}) = \min_{u \in U_N} J(u).
$$
Then, the projection formula becomes
\begin{equation}{\label{dis_projection1}}
	u^{*,N}=P_{U_N}(u^{*,N} - \rho J'(u^{*,N})). 
\end{equation}
For pre-chosen optimization step-sizes $\{\rho_i\}_i$ and an initial guess $u^{0,N}$, we introduce the following fixed point iteration scheme to determine the optimal control $u^{*, N}$
\begin{equation}\label{fixed_point}
	u^{i+1,N}=P_{U_N}(u^{i,N} - \rho_i J_N'(u^{i,N})),
\end{equation}
and we denote the error between the operator $J'$ and $J_N'$ by 
\begin{equation}
	\epsilon_N= \sup_i ||J'(u^{i,N}) - J_N'(u^{i,N})||.
\end{equation}
The following theorem guarantees that $u^{*,N}$ converges to the true optimal control $u^{*}$ under some assumptions. 
 \begin{theorem} {\label{thm1}}
 	Assume that $J'(\cdot)$ is lipschitz and uniformly monotone around $u^*$ and $u^{*,N}$ in the sense that there exist positive constants $c$ and $C$ such that 
 	\begin{align*}
 		& ||J'(u^*)- J'(v)|| \leq C ||u^* -v||, \ \ \   \forall v \in U. \\
 		 & (J'(u^*)- J'(v), u^*-v) \geq c||u^*-v^*||^2, \ \ \ \forall v \in U. \\
 		 &||J'(u^{*,N})- J'(v)|| \leq C ||u^{*,N} -v||, \ \ \ \forall v \in U_N .\\
 		& (J'(u^{*,N})- J'(v), u^{*,N}-v) \geq c ||u^{*,N}-v^*||^2, \ \ \ \forall v \in U .
 	\end{align*}
 Also, assume that the operator $J'_N$ is unbiased, i.e. $\epsilon_N  \rightarrow 0,  \   N\rightarrow \infty$, and
$\rho_i$ is picked such that $0 < 1 - 2 c \rho_i +(1+2C) \rho_i^2 \leq \sigma^2$ for some $0 < \sigma < 1$. Then the iteration scheme \eqref{fixed_point} is convergent, that is
\[
||u^{*}-u^{i,N}|| \rightarrow 0, \ i, N \rightarrow \infty.
\]
 \end{theorem}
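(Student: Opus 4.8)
The plan is to insert the discrete optimizer $u^{*,N}$ as an intermediate point and split by the triangle inequality,
\begin{equation*}
\|u^* - u^{i,N}\| \leq \|u^* - u^{*,N}\| + \|u^{*,N} - u^{i,N}\|,
\end{equation*}
so that the theorem reduces to showing the discretization error $\|u^* - u^{*,N}\|$ vanishes as $N \to \infty$ and the iteration error $\|u^{*,N} - u^{i,N}\|$ is driven, as $i \to \infty$, into a neighborhood of radius $O(\epsilon_N)$ that itself shrinks to zero as $N \to \infty$.

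For the iteration error, fix $N$ and set $e_i := u^{i,N} - u^{*,N}$. First I would note that the defining relation $u^{*,N} = P_{U_N}(u^{*,N} - \rho J'(u^{*,N}))$ is equivalent to the variational inequality $(J'(u^{*,N}), v - u^{*,N}) \geq 0$ for all $v \in U_N$, hence holds with every step size $\rho_i$ in place of $\rho$. Subtracting this from \eqref{fixed_point} and using that $P_{U_N}$ is nonexpansive, then inserting and removing $\rho_i J'(u^{i,N})$ and bounding the operator discrepancy by $\epsilon_N$, gives
\begin{equation*}
\|e_{i+1}\| \leq \big\| e_i - \rho_i\big(J'(u^{i,N}) - J'(u^{*,N})\big) \big\| + \rho_i\,\epsilon_N.
\end{equation*}
Expanding the square of the leading norm and applying the uniform monotonicity (contributing $-2c\rho_i\|e_i\|^2$) and the Lipschitz bound on $J'$ around $u^{*,N}$ (the third and fourth hypotheses with $v = u^{i,N}$) yields a bound of the form
\begin{equation*}
\big\| e_i - \rho_i\big(J'(u^{i,N}) - J'(u^{*,N})\big) \big\|^2 \leq \big(1 - 2c\rho_i + (1+2C)\rho_i^2\big)\|e_i\|^2 \leq \sigma^2\|e_i\|^2,
\end{equation*}
which is exactly where the step-size condition enters to force the contraction factor $\sigma < 1$. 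This produces the affine recursion $\|e_{i+1}\| \leq \sigma\|e_i\| + \rho_i\epsilon_N$; with $\bar\rho := \sup_i \rho_i$ its iterates satisfy $\limsup_{i\to\infty}\|e_i\| \leq \bar\rho\,\epsilon_N/(1-\sigma)$.

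For the discretization error I would use the variational characterizations of $u^*$ over $U$ and of $u^{*,N}$ over $U_N$, together with the strong monotonicity and Lipschitz bounds around $u^*$ (the first two hypotheses). Testing the first inequality against $u^{*,N} \in U_N \subset U$ and the second against an arbitrary approximant $\Pi_N u^* \in U_N$, a C\'ea-type manipulation gives
\begin{equation*}
c\,\|u^* - u^{*,N}\|^2 \leq \big(J'(u^*) - J'(u^{*,N}),\, u^* - u^{*,N}\big) \leq \|J'(u^{*,N})\|\,\|u^* - \Pi_N u^*\|.
\end{equation*}
Since the piecewise-constant spaces $U_N$ are nested with union dense in $U = L^2([0,T];\R^{d_1})$, one may pick $\Pi_N u^*$ with $\|u^* - \Pi_N u^*\| \to 0$; combined with a uniform bound on $\|J'(u^{*,N})\|$ this yields $\|u^* - u^{*,N}\| \to 0$. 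Feeding both estimates into the triangle inequality and sending $i, N \to \infty$ finishes the proof.

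I expect the main obstacle to be the bookkeeping of the double limit: the iteration estimate must be made uniform in $N$ --- a uniform contraction factor $\sigma$ and a uniform step-size bound $\bar\rho$ --- so that the $O(\epsilon_N)$ residual of the fixed-$N$ iteration and the discretization error can be sent to zero together. A secondary point is the uniform boundedness of $\|J'(u^{*,N})\|$ used in the C\'ea estimate, which I would obtain from the Lipschitz hypothesis together with the boundedness of the iterates inside the convex set $U$.
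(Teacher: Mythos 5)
You cannot be checked against the paper's own argument here, because the paper does not contain one: Theorem \ref{thm1} is stated without proof and the remark immediately after it defers entirely to \cite{main}. Judged on its own merits, your proposal is the standard argument for results of this type and its architecture is sound: the equivalence of the projection fixed point with the variational inequality (valid for every positive step size), nonexpansiveness of $P_{U_N}$ plus the definition of $\epsilon_N$ to peel off the $\rho_i\epsilon_N$ residual, strong monotonicity and Lipschitz continuity to force a contraction, and a C\'ea-type estimate combined with approximation of $u^*$ by local averages (which stay in ${\bf C}$ by convexity and closedness, so nestedness of the $U_N$ is not even needed) for the discretization error $\|u^*-u^{*,N}\|$. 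Both halves are correct in substance, and your closing remarks correctly identify where uniformity in $N$ is needed for the joint limit.

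Two points need repair. First, the contraction constant: expanding $\|e_i-\rho_i(J'(u^{i,N})-J'(u^{*,N}))\|^2$ and applying monotonicity and Lipschitz continuity yields the factor $1-2c\rho_i+C^2\rho_i^2$, not $1-2c\rho_i+(1+2C)\rho_i^2$ as you assert; in fact $C^2$ is the sharp constant obtainable from those two hypotheses (decompose $g:=J'(u^{i,N})-J'(u^{*,N})$ into components parallel and orthogonal to $e_i$; the worst case is $(g,e_i)=c\|e_i\|^2$, $\|g\|=C\|e_i\|$). Since $C^2\le 1+2C$ only when $C\le 1+\sqrt{2}$, the theorem's stated step-size window does not by itself guarantee your contraction for large $C$: at $\rho_i=2c/C^2$, which the hypothesis permits when $C>1+\sqrt{2}$, your factor equals exactly $1$. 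You should either impose $\rho_i<2c/C^2$ or flag the discrepancy explicitly rather than silently matching the paper's constant. Second, the uniform bound on $\|J'(u^{*,N})\|$ in the C\'ea step cannot come from ``boundedness of the iterates inside $U$''---the set ${\bf C}$, hence $U$, need not be bounded. Instead, use the first Lipschitz hypothesis with $v=u^{*,N}\in U_N\subset U$ to get $\|J'(u^{*,N})\|\le\|J'(u^*)\|+C\|u^*-u^{*,N}\|$, insert this into your estimate, and absorb the term $C\|u^*-u^{*,N}\|\,\|u^*-\Pi_N u^*\|$ into the left-hand side via the resulting quadratic inequality in $\|u^*-u^{*,N}\|$; this still gives $\|u^*-u^{*,N}\|\to 0$. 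With these two adjustments your proof is complete and supplies an argument the paper itself omits.
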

 \begin{remark}
 	 We refer to \cite{main} for the proof of Theorem \ref{thm1}, in which one will also find that 
 	 \begin{equation}
 	 	||u^{*}-u^{i,N}|| \sim \oo(\Delta t),  \ \  i,N \rightarrow \infty, \ \ \text{if} \ \ \epsilon_N \sim \oo(\Delta t).
 	 \end{equation}
 \end{remark}

\subsection{Numerical schemes for FBSDEs}
In order to compute the gradient $J'_N$ in the iteration scheme \eqref{fixed_point}, we need to solve the BSDE \eqref{BSDE} numerically. To this end, we integrate the BSDE over the time interval $[t_n, t_{n+1}]$ and obtain the following integral form
\begin{equation}{\label{intbsde}}
	\bp_{t_n}=\bp_{t_{n+1}} + \int_{t_n}^{t_{n+1}} f(x_t,\bp_t,\bq_t,u_t) dt - \int_{t_n}^{t_{n+1}} \bq_t dW_t.
\end{equation}
Taking conditional expectation $\Exn[\cdot]:=\E[\cdot | x_{t_n}=x]$ on both sides of the above equation and apply the left-point formula to approximate the deterministic integral, we obtain
\begin{equation}\label{exactp}
	\bp^x_{t_n}=\Exn[\bp_{t_{n+1}}] + \Dt f(x, \bp^x_{t_n},\bq^x_{t_n},u_{t_n}) + \bar{\bR}^x_{p,n} ,
\end{equation}
where $\bar{\bR}^x_{p,n}:=\int_{t_n}^{t_{n+1}}\Exn[f(x_t,\bp_t,\bq_t,u_t)] dt - \Dt f(x, \bp^x_{t_n},\bq^x_{t_n},u_{t_n})$ is the truncation error, and the stochastic integral is eliminated due to the martingale property of It\^o integral.

To derive an approximation for $\bq$, we multiply both sides of (\ref{intbsde}) by $\Delta W_{t_{n+1}}:= W_{t_{n+1}} - W_{t_n}$ and get
$$ \bp_{t_n} \Delta W_{t_{n+1}} =\bp_{t_{n+1}} \Delta W_{t_{n+1}}+ \int_{t_n}^{t_{n+1}} f\Delta W_{t_{n+1}} dt - \int_{t_n}^{t_{n+1}} \bq_t dW_t \Delta W_{t_{n+1}}. $$
Taking conditional expectation on both sides of the above equation, we have
\begin{equation}{\label{exactq}}
	\bq^x_{t_{n+1}}=\frac{1}{\Dt} \big( \Exn [\bp_{t_{n+1}}\Delta W_{t_{n+1}}] + \bar{\bR}^x_{q,n} \big),
\end{equation}
where
\begin{equation}
	\bar{\bR}^x_{q,n}=\int_{t_n}^{t_{n+1}} \Exn[f\Delta W_{t_{n+1}} ]dt-\int_{t_n}^{t_{n+1}} \Exn[\bq_t]dt + \Dt \bq^x_{t_{n}}.
\end{equation}
In conclusion, we introduce the following temporal discretization scheme for a given spatial point $x \in \mathbb{R}^d$
\begin{align}
	\bp^x_n&=\Exn[\bp_{n+1}] + \Dt f(x,\bp^x_n,\bq^x_n,u_{t_n}), \nonumber \\
	\bq^x_n&=\frac{1}{\Dt} \Exn[\bp_{n+1}  \Delta W_{n+1}]. \label{semi_dis}
\end{align}
That is, for each pair $(t_n, x)$, solutions $\bp_{t_n}^x, \bq_{t_n}^x$ are approximated by $\bp^x_n, \bq^x_n$. 

\subsection{Approximation of $\Exn[\cdot]$}

In the system of equations \eqref{semi_dis}, we can see that the computation of $\bp^x_n, \bq^x_n$ requires computing the conditional expectation $\Exn[\cdot]$, and it also requires approximations of $\bp_{n+1}, \bq_{n+1}$ over the state space for $x_t$.  Typically, people compute values of those functions at a collection of spatial points
\begin{equation*}{\label{xpoints}}
	 X := \lbrace x_k \rbrace^M_{k=1} \subset \R^d,
\end{equation*}
where $M$ is the total number of the spatial points, and then use interpolatory values to approximate the entire function. 

In one dimensional problem, the spatial interpolation is realized by using simple polynomial interpolation. 
However, classical polynomial interpolation methods, together with the uniform tensor-product grid points, suffer from the curse of dimensionality and may turn out to be a rather inefficient method in even moderate dimensions such like $d=4$ or $5$. 

In this paper, we use meshfree points as our choice of spatial points, on which we solve for the FBSDEs system \eqref{SDE}-\eqref{BSDE}, and we shall apply meshfree approximation as our interpolation method to construct the interpolatory approximation for the entire function -- due to its high flexibility/high efficiency advantages compared with the standard tensor grids based polynomial interpolation methods.
The collection of meshfree points $X$ will be $d$-dimensional Halton sequence points in this work, 
and we explore two methods for spatial interpolation: the moving least square method (MLS) and the radial basis function (RBF) interpolation.

To proceed, we denote $I_h \phi(x)$ as our interpolatory approximation operator for a function $\phi(x)$ at the point $x \in \R^d$, i.e.
\begin{equation}
	\phi(x) \approx I_h \phi(x) :=\sum_{i \in \mathcal{I}(x)} a^*_i(x) \phi(x_i)
\end{equation}
with $ a^*_i(x)$ defined pointwise in terms of $x$. 

And we consider two spatial approximation schemes for $I_h$
\begin{enumerate}
	\item The moving least square method (MLS), and it is defined in \eqref{mls_lagrange}. 
	\item The radial basis function interpolation method (RBF), and it is defined in \eqref{rbf_f}. 
\end{enumerate}

Here $\mathcal{I}(x)$ is the collection of indices of spatial points that we use to approximate $\phi(x)$. 
One important concept in scattered data approximation is the fill distance.
\begin{align}{\label{fill_distance}}
	h_{X,\Omega} :=  \sup_{x \in \Omega} \min_{1 \leq k \leq M} ||x-x_k||_2
\end{align}

 Such distance measures the ``density" of the data, and it can be interpreted as the radius of the largest ball that can be placed in the domain without intersecting the data points \cite{fass}. In later sections, we will use $h$ or $h_{X,\Omega}$ interchangeably to denote the fill distance, and we will show that both MLS and RBF approximations will give us the desired level of interpolation accuracy, i.e. $\oo(h^2)$ in this work.

In what follows, we shall derive our approximation method to compute conditional expectations. 
First of all, we simulate the dynamics of $x_{t_{n+1}}$ by using the Euler - Maruyama scheme, and we let
\begin{equation}{\label{cdep1}}
	\tilde{x}^{x,t_n}_{t_{n+1}} = x + b(x,u(t_n)) \Dt + \sigma(x, u(t_n)) \Delta W_{n+1}.
\end{equation}
Since $\Delta W_{n+1} \sim \sqrt{\Dt} \zeta$, $\zeta \sim \mathcal{N}(0,I)$, the above simulation can be written as
\begin{equation}
	\tilde{x}^{x, t_n}_{t_{n+1}} =x +b(x,u(t_n)) \Dt + \sigma(x, u(t_n))\sqrt{\Dt} \zeta.
\end{equation}
As a result, the conditional expectation can be written as a $d$ dimensional integral
\begin{equation}{\label{cdep}}
	\Exn[\tilde{\bp}_{t_{n+1}}] := \Exn[\bp_{t_{n+1}}(\tilde{x}^{x, t_n}_{t_{n+1}})]= \int \bp_{t_{n+1}}\Big( x +b(x,u(t_n)) \Dt + \sigma(x, u(t_n)) \sqrt{\Dt} \xi \Big)  \rho(\xi)d \xi,
\end{equation} 
where $\rho(\xi) \sim e^{-\frac{1}{2} \xi^T \Sigma^{-1} \xi}$ is the probability density function of the multivariate Gaussian distribution. In this approach, we use Guass-Hermite quadrature to approximate the integral ($\ref{cdep}$) due to its high efficiency in approximating moderate dimensional integrals, and we get
\begin{align*}
	\Exn[\tilde{\bp}_{t_{n+1}}] \approx \tilde{\E}^x_{t_n}[\bp_{t_{n+1}}] := \sum^L_{l_1=1} ... \sum^L_{l_d=1}  \bp_{t_{n+1}} \Big(x +b(x,u(t_n)) \Dt +  (\sum _{j} \sigma_{\cdot,j}(x, u(t_n)) \sqrt{\Dt} \xi^j_{l_j}  )\Big) \prod_{j=1}^d\omega^j_{l_j}  ,
\end{align*}
where $\{\xi^{j}_{l_j}\}$ and $\{ \omega^j_{l_j}\}$ are Guass-Hermite points and weights. 
Since the simulated value $\tilde{x}^{x, t_n}_{t_{n+1}}$
may not coincide with any of the specified spatial points,  
we will use the interpolated value $I_h \bp_{t_{n+1}}(\tilde{x}^{x, t_n}_{t_{n+1}})$ to approximate $\bp_{t_{n+1}}(\tilde{x}^{x, t_n}_{t_{n+1}})$, hence we define
\begin{equation*}
	\hat{\E}_{t_n}^x[\bp_{t_{n+1}}]:=\sum^L_{l_1,\cdots, l_d=1}  I_{h}\bp_{t_{n+1}}\Big(x +b(x,u(t_n)) \Dt + \big [\sum _{j} \sigma_{\cdot,j}(x, u(t_n)) \sqrt{\Dt} \xi^j_{l_j} \big ] \Big) \prod_{j=1}^d\omega^j_{l_j}
\end{equation*}
as our approximation for $\E_{t_n}^x[\bp_{t_{n+1}}]$ with interpolatory approximation for $\bp_{t_{n+1}}$. In this paper, we use MLS and RBF as meshfree interpolation methods to calculate $I_h$ and we shall discuss their approximation errors in the next section. We want to mention that other approximation methods, such like Monte Carlo method, can also be used to approximate ($\ref{cdep}$) when the dimension of the problem is very high. However, the difficulty of interpolation remains for most approaches and the main effort of our research in this work is to address the challenge of high dimensional function approximation.

In the scheme for $\bq_{t_n}^x$, we denote by $\tilde{\E}^x_{t_n}[\bp_{t_{n+1}} \Delta W^T_{n+1}]$ the approximation for $\E^x_{t_n}[\tilde{\bp}_{t_{n+1}} \Delta W^T_{n+1}]$ and we use $\tilde{\bp}^i_{t_{n+1}} \Delta W^{k,T}_{n+1}$ to denote the $(i,k)$-th component in the matrix. Therefore, 
\begin{equation*}
	\tilde{\E}^x_{t_n}[\bp^i_{t_{n+1}} \Delta W^{k,T}_{n+1}] := \sum^L_{l_d=1}... \sum^L_{l_1=1} \bp^i_{t_{n+1}}\Big(x +b(x,u(t_n)) \Dt + \big[\sum _{j} \sigma_{\cdot,j}(x, u(t_n)) \sqrt{\Dt} \xi^j_{l_j} \big] \Big) \sqrt{\Dt} \zeta^k_{l_k}\prod_{j=1}^d\omega^j_{l_j},
\end{equation*}
and the interpolatory approximation for the above expectation is 
\begin{equation*}
\hat{\E}^x_{t_n}[\bp^i_{t_{n+1}} \Delta W^{k,T}_{n+1}] :=\sum^L_{l_1,\cdots, l_d=1} I_h \bp^i_{t_{n+1}} \Big(x +b(x,u(t_n)) \Dt + \big [\sum _{j} \sigma_{\cdot,j}(x, u(t_n)) \sqrt{\Dt} \xi^j_{l_j} \big ] \Big) \sqrt{\Dt} \zeta^k_{l_k}\prod_{j=1}^d\omega^j_{l_j}.
\end{equation*}

As a result, we have the following approximations 
\begin{align} \label{condit}
	\E_{t_n}^x[\bp_{t_{n+1}}]&=\hat{\E}^x_{t_n}[\bp_{t_{n+1}}]+\hat{\bR}^x_{p,n}, \nonumber \\
	\E_{t_n}^x[\bp_{t_{n+1}} \Delta W^T_{n+1}]&=\hat{\E}^x_{t_n}[\bp_{t_{n+1}}\Delta W^T_{n+1}]+\hat{\bR}^x_{q,n}, 
\end{align}
where
\begin{align*}
	\hat{\bR}^x_{p,n} :=\tilde{\bR}^x_{p,n}+\bR^x_{E,p,n}+\bR^x_{I,p,n}, \\
	\hat{\bR}^x_{q,n} :=\tilde{\bR}^x_{q,n}+\bR^x_{E,q,n}+\bR^x_{I,q,n} 
\end{align*}
are approximation errors, which are composed of three parts: the Euler approximation error for the state $x_t$, i.e. $\tilde{\bR}^x_{\cdot,n}$, the Gauss-Hermite quadrature error, i.e. $\bR^x_{E,\cdot,n}$, and the function approximation error from spatial interpolation, i.e. $\bR^x_{I,\cdot,n}$. Specifically, the error terms in $\hat{\bR}^x_{p,n}$ are defined as follows
	$$\tilde{\bR}^x_{p,n}= \E^{x}_{t_{n}} [\bp_{t_{n+1}}]-\E^{x}_{t_{n}}[\tilde{\bp}_{t_{n+1}}], \ \bR^x_{E,p,n}=\E^{x}_{t_{n}}[\tilde{\bp}_{t_{n+1}}]- \tilde{\E}^{x}_{t_n}[\bp_{t_{n+1}}], \ \bR^x_{I,p,n}= \tilde{\E}_{t_n}^x[\bp_{t_{n+1}}]-\hat{\E}_{t_n}^x[\bp_{t_{n+1}}].$$
The error terms in $\hat{\bR}^x_{q,n}$ are defined as follows
	$$\tilde{\bR}^x_{q,n}= \E^{x}_{t_{n}} [\bp_{t_{n+1}} \Delta W^T_{n+1}]-\E^{x}_{t_{n}}[\tilde{\bp}_{t_{n+1}} \Delta W^T_{n+1}], \ \bR^x_{E,q,n}=\E^{x}_{t_{n}}[\tilde{\bp}_{t_{n+1}}\Delta W^T_{n+1}]- \tilde{\E}^{x}_{t_n}[\bp_{t_{n+1}}\Delta W^T_{n+1}],$$
	$$ \ \bR^x_{I,q,n}= \tilde{\E}_{t_n}^x[\bp_{t_{n+1}}\Delta W^T_{n+1}]-\hat{\E}_{t_n}^x[\bp_{t_{n+1}}\Delta W^T_{n+1}].$$ 	
%

\subsection{Fully discretized schemes}
Based on the above discussions, we rewrite the approximation schemes \eqref{exactp}, \eqref{exactq} as following 
\begin{equation}\label{pde_exact} 
\begin{aligned}
	\bp^x_{t_n}&=\hat{\E}^{x}_{t_{n+1}}[\bp_{t_{n+1}}] + \Dt f(x,\bp^x_{t_n},\bq^x_{t_n},u_{t_n}) +\bR^x_{p,n} , \ \bp^x_{t_N}=g(x)  \\
	\bq^x_{t_n}&=\frac{1}{\Dt} (\hat{\E}^{x}_{t_{n+1}}[\bp_{t_{n+1}} \Delta W^T_{n+1}]+\bR^x_{q,n})
\end{aligned}	
\end{equation}
where $\bR^x_{p,n}=\bar{\bR}^x_{p,n}+\hat{\bR}^x_{p,n}$ , $\bR^x_{q,n}=\bar{\bR}^x_{q,n}+\hat{\bR}^x_{q,n}$. 

By dropping the error terms, we propose the following fully discretized scheme for solving the system of BSDE on a selection of spatial points $\{x_k\}_{k=1}^{M}$
\begin{equation}{\label{pde_num}}
\begin{aligned}
	\bp^{x_k}_n&=\hEkn[\bp_{n+1}] + \Dt f({x_k},\bp^{x_k}_n,\bq^{x_k}_n,u_{t_n}), \\
	\bq^{x_k}_n&=\frac{1}{\Dt} (\hEkn  [\bp_{n+1} \Delta W^T_{n+1}]),
\end{aligned}	
\end{equation}
where $\bp_n, \bq_n$ are interpolatory approximations described as 
\begin{equation}{\label{pwpq}}
\begin{aligned}
	\bp^i_{n}(x) & = \sum_{k \in \mathcal{I}_h(x)} a^i_{\bp_n, k}(x) \bp_{n}^{i,x_k}, \\
	\bq^{ij}_n(x) & = \sum_{k \in \mathcal{I}_h(x)} a^{ij}_{\bq_n,k}(x) \bq_{n}^{ij,x_k}.
\end{aligned}	
\end{equation}
Here $a^i_{\bp_n, k}(x),a^{ij}_{\bq_n, k}(x)$ are the coefficients obtained from interpolating the function $\bp^i_n,\bq^{ij}_n$ at spatial location $x^k$, and the upper indices indicate the $i$th (resp. $ij$th) component of $\bp$ (resp. $\bq$). Also, $\mathcal{I}_{h}(x)$ is the collection of the indices of neighborhood points of $x$.

 When the radial basis interpolation method is used, $\mathcal{I}_h(x)=\lbrace k \rbrace^M_{k=1}$ since all the data points will be used. However, when we are using the moving least square interpolation method, as what will be stated in Theorem \ref{lagrange_form}, it consists of the neighborhood points that  reproduce the local polynomial functions of order $l$: 
 \begin{align} {\label{mls_reduce}}
 B^k &= \lbrace x_j \big | j \in \lbrace 1, ..., M\rbrace , \ ||x-x_j||_2 < r \rbrace \\
		\mathcal{I}_h(x) &= \big\lbrace i \big | \lbrace x_i \rbrace^I_{i} \in B^k  \text{ is } \pi_l(\R^d) \text{-unisolvent} \big \rbrace
	\end{align}
	$r$ is the radius of the compact support for the weight function as in \eqref{mls_initial_def}.



\subsection{Summary of the numerical algorithm}
Now, we give a complete description of the numerical schemes for implementing optimization scheme \eqref{fixed_point}. 
Since our goal is to find the gradient $J'_N$, which is under expectation, it is natural to design an approximation operator for $\E$. We define the operator $\hE$ to be the following:
\begin{equation}{\label{ehat}}
	\hE[\phi_{t_0}]=\phi_{t_0}, \  \hE[\phi_{t_n}]=\hE^{x_0}_{t_0}[\hE_{t_1}[... \hE_{t_{n-1}}[\phi_{t_n}]]],
\end{equation}
and we use Monte Carlo sampling to evaluate the above expectation as introduced in \cite{main}, and the function values are obtained by using our meshfree interpolation methods. The spatial locations are typically chosen to be Halton sequences in the corresponding dimension which are used in conjunction with radial base interpolation functions when the dimension is high. And it is because such approximation strategy are demonstrated to be more efficient than the tensor-grid approximation methods when the dimension is high. \cite{wen}, \cite{fass}. 
Therefore, the gradient operator $J'(u)|_{t_n}$ is approximated by
\begin{equation}{\label{project1}}
	J_N'(u)|_{t_n}= \hE[\bp_n b'_u(\cdot, u_{t_n})) + tr\big( \bq_n^{\dagger} D_u\sigma(\cdot,u_{t_n})\big)] + j'(u_{t_n}).
\end{equation}


In what follows, we summarize our gradient projection method.

\noindent \textbf{The Gradient Projection Algorithm using MLS}
\vspace{0.3em}

\noindent Choose a tolerance $\epsilon_0$, an initial guess $u_0 \in U_N$, the meshfree spatial point set $X=\lbrace x_k \rbrace$ and a weight function $\omega$ as in \eqref{mls_initial_def}. 
\begin{enumerate}
	\item Set the terminal condition $\bp_N^k=g(x_k)=k'(x_T)$. 
	\item Compute $(\bp_n, \bq_n)$ for each $n=N-1, ... , 0$ by the schemes \eqref{pde_num}:

Compute $(\bp^{x_k}_n, \bq^{x_k}_n)$ at each space location and define $(\bp,\bq)$ through \eqref{pwpq}, where the coefficients are found by using the form in Theorem \ref{lagrange_form}, where the numerical implementation is given in \cite{fass} chapter $23$. 
	\item Approximate the gradient $J_N'(u)|_{t_n}$ by (\ref{project1}). 
	\item Carry out the optimization procedure \eqref{fixed_point} with approximated gradient in Step 3.
%
	\item Go back to step 1 until the tolerance is reached. 
\end{enumerate}

 \textbf{The Gradient Projection Algorithm using RBF}
\vspace{0.3em}

\noindent Choose a tolerance $\epsilon_0$, an initial guess $u_0 \in U_N$ and the meshfree spatial point set $X=\lbrace x_k \rbrace$ and a radial basis $\phi$. 
\begin{enumerate}
	\item Set the terminal condition $\bp_N^k=g(x_k)=k'(x_T)$. 
	\item Compute $(\bp_n, \bq_n)$ for each $n=N-1, ... , 0$ by the schemes \eqref{pde_num}:

Compute $(\bp^{x_k}_n, \bq^{x_k}_n)$ at each space location and define $(\bp,\bq)$ through \eqref{pwpq}, where the coefficients are found by using \eqref{psetup}. 
	\item Approximate the gradient $J_N'(u)|_{t_n}$ by (\ref{project1}). 
	\item Carry out the optimization procedure \eqref{fixed_point} with approximated gradient in Step 3.
%
	\item Go back to step 1 until the tolerance is reached. 
\end{enumerate}

\section{Error Analysis for the algorithm}\label{Analysis}
The general computational framework of our meshfree approximation method follows the gradient projection approach for solving the stochastic optimal control problem. The primary contribution of this work is the application of meshfree approximation to approximate high dimensional functions, therefore improve the efficiency of the gradient project approach in solving higher dimensional problems. As a theoretical validation for our effort, in this section we give a detailed error analysis for our meshfree approximation algorithm. 

Since the approximation error for the gradient $J'_N$ is composed of spatial approximation errors and temporal approximation errors, we will first give some properties of the operator  $\hat{\E}$,  which are directly related to the spatial approximation errors. Then, we will analyze the (spatial-temporal) errors in approximating solutions $(\bp,\bq )$ of the BSDE. Finally, we will combine both spatial analysis and temporal analysis to obtain the approximation errors for $J'_N(u^{i,N})$, and therefore derive the error analysis for the optimal control.

\subsection{Estimation for meshfree approximation}
In this subsection, we focus on the analysis for meshfree approximation. The following lemma gives some basic properties satisfied by the meshfree approximation operator $\hE$, which will be used in the numerical analysis for the optimal control problem. These properties are analogues of standard properties for expectation. 

To prove Lemma \ref{operator_error}, we make some assumptions on the basis of interpolation functions.
 
Both spatial the interpolation methods are known to take Lagrangian forms : 
for $f \in C(\R^d)$
\begin{equation}
	I_h f(x) = \sum^M_{k=1} f(x_k) \Phi_k(x_k, x)
\end{equation}
where for MLS, $\lbrace \Phi_k(x_k, x) \rbrace^M_{k=1}$ are defined to be \eqref{mls_lagrange} in Theorem \ref{lagrange_form} (Also see \cite{wen}, \cite{fass}); and for RBF, it is defined in Equation (11.1) in \cite{wen} Chapter 11, which takes a different form than \eqref{rbf_f}. Also, notice that by definition the basis $\lbrace \Phi_k(x_k, x) \rbrace^M_{k=1}$ don't depend on $\lbrace f_k(x_k) \rbrace^M_{k=1}$.

We make the following assumption on $\lbrace \Phi_k(x_k, x) \rbrace^M_{k=1}$: 
\begin{enumerate}
	\item $\Phi_k(x_k, x)  \geq 0, \ \forall  \ k \in \lbrace 1, ..., M\rbrace$,
	\item $\sum^M_{k=1} \Phi_k(x_k, x) \leq 1 $.
\end{enumerate}
We point out that such assumptions are satisfied by some widely used scattered data approximation methods, e.g. the Shepard's method, which takes the following form: 
\begin{equation}
	I_h f(x):= \sum^M_{k=1} f(x_k) \frac{\omega(x_k,x)}{\sum^M_{k=1} \omega(x_k,x)},
\end{equation}
where the function $\omega(x,y)$ is a positive weight function typically with compact support.

\begin{lemma}{\label{assumptions}}
	Let $\phi_{t_{n+1}}=\phi(t_{n+1},x_{t_{n+1}})$, we have 
	\begin{enumerate}
		\item $\hE [\hE_{t_n}[\phi_{t_{n+1}}]]=\hE[\phi_{t_{n+1}}]$,
		\item $(\hEkn[\phi_{t_{n+1}}])^2 \leq \hEkn[(\phi_{t_{n+1}})^2]$,
		\item $(\hEkn[\phi_{t_{n+1}} \Delta W_{t_{n+1}}])^2 \leq \big( \hEkn[(\phi_{t_{n+1}})^2 ]-(\hEkn[\phi_{t_{n+1}}])^2 \big) \Delta t$,
		\item (Monotonicity) If $\phi_{t_{n+1}}^x \geq 0 $  for any $x$, then $\hEkn[\phi_{t_{n+1}}] \geq 0$, and so $\hE[\phi_{t_{n+1}}] \geq 0$.
	\end{enumerate}
\end{lemma}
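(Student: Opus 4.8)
The plan is to reduce all four claims to two elementary structural facts about the one-step operator $\hEkn$. Writing the quadrature weights as $W_l := \prod_{j=1}^d \omega^j_{l_j}$ and the propagated nodes as $y_l := x_k + b(x_k,u_{t_n})\Dt + \sum_j \sigma_{\cdot,j}(x_k,u_{t_n})\sqrt{\Dt}\,\zeta^j_{l_j}$ (the same Gauss--Hermite abscissae $\zeta^j_{l_j}$ appearing in $y_l$ and in the increment factor, consistent with $\Delta W\sim\sqrt{\Dt}\,\zeta$), the operator reads $\hEkn[\phi] = \sum_l W_l\, I_h\phi(y_l)$. First I would record the weight identities: since the $\omega^j_{l_j}$ are the probabilist's Gauss--Hermite weights and $L\ge 2$, one-dimensional exactness up to degree two gives $\sum_{l_j}\omega^j_{l_j}=1$, $\sum_{l_j}\omega^j_{l_j}\zeta^j_{l_j}=0$, and $\sum_{l_j}\omega^j_{l_j}(\zeta^j_{l_j})^2=1$, which by the product structure lift to $W_l\ge 0$, $\sum_l W_l=1$, $\sum_l W_l\zeta^r_{l_r}=0$ and $\sum_l W_l(\zeta^r_{l_r})^2=1$ for every component $r$. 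Second I would record a pointwise ``interpolated Jensen'' bound: writing $\Phi_m=\Phi_m(x_m,y)$, the assumptions $\Phi_m\ge 0$ and $\sum_m \Phi_m\le 1$ give, by Cauchy--Schwarz on the data index $m$, that $(I_h\phi(y))^2 = (\sum_m\phi(x_m)\Phi_m)^2 \le (\sum_m\Phi_m)(\sum_m\phi(x_m)^2\Phi_m)\le I_h(\phi^2)(y)$ for every $y$.

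With these in hand the routine parts follow. Property 1 is an unrolling of the nested definition \eqref{ehat}: substituting $\psi_{t_n}:=\hE_{t_n}[\phi_{t_{n+1}}]$ into $\hE[\psi_{t_n}]=\hE^{x_0}_{t_0}[\hE_{t_1}[\cdots\hE_{t_{n-1}}[\psi_{t_n}]]]$ reproduces exactly $\hE^{x_0}_{t_0}[\hE_{t_1}[\cdots\hE_{t_n}[\phi_{t_{n+1}}]]]=\hE[\phi_{t_{n+1}}]$, so the identity is definitional. Property 4 follows from termwise nonnegativity: if $\phi\ge 0$ then $I_h\phi(y_l)=\sum_m\phi(x_m)\Phi_m\ge 0$, hence $\hEkn[\phi]=\sum_l W_l I_h\phi(y_l)\ge 0$, and iterating this single-step monotonicity through the $n$ nested layers of \eqref{ehat} gives $\hE[\phi_{t_{n+1}}]\ge 0$. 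Property 2 combines Cauchy--Schwarz in the quadrature index (using $\sum_l W_l=1$) with the pointwise bound above: $(\hEkn[\phi])^2=(\sum_l W_l I_h\phi(y_l))^2\le \sum_l W_l (I_h\phi(y_l))^2\le\sum_l W_l I_h(\phi^2)(y_l)=\hEkn[\phi^2]$.

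Property 3 is the main obstacle, and I expect it to be the crux of the argument; it mimics the continuous identity $\E[\phi\Delta W]=\E[(\phi-\E\phi)\Delta W]$. Working componentwise, the decisive point is that the vanishing first moment $\sum_l W_l\zeta^r_{l_r}=0$ lets me subtract any constant, in particular $c:=\hEkn[\phi]$, from $I_h\phi(y_l)$ without changing the value, so that $\hEkn[\phi\Delta W^r]=\sqrt{\Dt}\sum_l W_l\zeta^r_{l_r}(I_h\phi(y_l)-c)$. Cauchy--Schwarz in $l$ together with the second-moment identity $\sum_l W_l(\zeta^r_{l_r})^2=1$ then gives $(\hEkn[\phi\Delta W^r])^2\le\Dt\sum_l W_l(I_h\phi(y_l)-c)^2=\Dt(\sum_l W_l(I_h\phi(y_l))^2-c^2)$, where the last equality uses $\sum_l W_l I_h\phi(y_l)=c$ and $\sum_l W_l=1$. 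Finally the pointwise interpolated Jensen bound yields $\sum_l W_l(I_h\phi(y_l))^2\le\hEkn[\phi^2]$, and since $c^2=(\hEkn[\phi])^2$ this is exactly the claimed $(\hEkn[\phi_{t_{n+1}}\Delta W_{t_{n+1}}])^2\le(\hEkn[\phi^2]-(\hEkn[\phi])^2)\Dt$. The subtlety to watch is that the interpolation need not reproduce constants (only $\sum_m\Phi_m\le 1$ is assumed), so the centering step cannot be justified through $I_h$ and must instead be carried out at the level of the quadrature weights; the two Cauchy--Schwarz inequalities, one over the quadrature index and one over the data index inside $I_h$, have to be chained in this order for the final bound to close.
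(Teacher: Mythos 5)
Your proposal is correct, and for parts 1, 2, and 4 it coincides with the paper's own argument: part 1 is definitional unrolling of \eqref{ehat}, part 4 is termwise nonnegativity, and part 2 chains a Cauchy--Schwarz over the quadrature index (using $\sum_l W_l \le 1$) with the pointwise bound $(I_h\phi)^2 \le I_h(\phi^2)$ obtained from Cauchy--Schwarz over the data index together with the two standing assumptions $\Phi_k \ge 0$ and $\sum_k \Phi_k \le 1$ --- exactly the paper's inequality \eqref{lemma_last1}. Where you genuinely diverge is part 3, which you correctly identify as the crux: the paper does \emph{not} prove the key variance-type inequality \eqref{lemma_first_stage}, but instead asserts it ``by applying exactly the same argument as the Proofs in'' three cited references, and its subsequent displayed chain even carries a sign typo (a $-$ becoming a $+$ before returning to $-$ in the final line). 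Your argument fills this gap in a self-contained way: center by the constant $c=\hEkn[\phi_{t_{n+1}}]$ using the vanishing first moment $\sum_l W_l \zeta^r_{l_r}=0$ of the Gauss--Hermite rule, apply Cauchy--Schwarz in $l$ with the second-moment identity $\sum_l W_l (\zeta^r_{l_r})^2 = 1$ (valid for $L\ge 2$), expand $\sum_l W_l (I_h\phi(y_l)-c)^2 = \sum_l W_l (I_h\phi(y_l))^2 - c^2$ via $\sum_l W_l = 1$, and finish with the pointwise Jensen bound. Your observation that the centering must be performed at the level of the quadrature weights --- because $I_h$ is not assumed to reproduce constants, only $\sum_k\Phi_k\le 1$ --- is a genuine subtlety that the paper's proof never confronts, since it outsources precisely this step. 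The one caveat is that your weight identities $\sum_l W_l = 1$ and $\sum_l W_l(\zeta^r_{l_r})^2 = 1$ hold with equality for the normalized Gauss--Hermite rule, whereas the paper's stated assumption on the Lagrange basis is only an inequality; this is consistent, but you should note that parts 2 and 3 survive if $\sum_l W_l \le 1$ as well (your Cauchy--Schwarz steps only improve), so no generality is lost.
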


\begin{proof}
	For notational simplicity, we prove only the case when $d=1$, and the proof for multi-dimensional case is analogous. 

	Part 1 of the Lemma is true due to \eqref{ehat}. 
	
	For Part 2, notice that
	\begin{align}
		(I_h \phi_{t_{n+1}}(x))^2 &= \Big( \sum^M_{k=1} f(x_k) \Phi_k(x_k,x) \Big)^2 \nonumber \\
		& \leq \Big( \sum^M_{k=1} f(x_k)^2 \Phi_k(x_k,x) \Big) (\sum^M_{k=1} \Phi_k(x_k,x)) \nonumber \\ 
		& \leq \sum^M_{k=1} f(x_k)^2 \Phi_k(x_k,x) \\
		& =I_h \phi^2_{t_{n+1}}(x), \label{lemma_last1}
	\end{align}
where we have used Cauchy's inequality and the assumptions made previously. 
	Now by definition, we have
	\begin{align}
		(\hEkn[\phi_{t_{n+1}}])^2 &\leq  \Big(\sum^L_{i=1} I_h \phi_{t_{n+1}}(x_k+b(x_k,u_{t_n})\Delta t +\sigma(x_k,u_{t_n})\sqrt{\Delta t} \xi_i ) \omega_i\Big)^2 \nonumber \\ 
		& \leq \sum^L_{i=1} \Big( I_h \phi_{t_{n+1}}(x_k+b(x_k,u_{t_n})\Delta t +\sigma(x_k,u_{t_n})\sqrt{\Delta t} \xi_i )\Big)^2 \omega_i \sum^L_{i=1} \omega_i \nonumber \\
		& \leq \sum^L_{i=1}  I_h \Big(\phi_{t_{n+1}}(x_k+b(x_k,u_{t_n})\Delta t +\sigma(x_k,u_{t_n})\sqrt{\Delta t} \xi_i )\Big)^2 \omega_i, \label{lemma_last}
	\end{align}
where in \eqref{lemma_last}, we used \eqref{lemma_last1}. And the conclusion holds. 

For part 3 of the lemma, 
 \begin{align}
 	(\hEkn[\phi_{t_{n+1}} \Delta W_{t_{n+1}}])^2 & = \Big( \sum^L_{i=1} I_h \phi_{t_{n+1}}(x_k+b(x_k,u_{t_n})\Delta t +\sigma(x_k,u_{t_n})\sqrt{\Delta t} \xi_i)\sqrt{\Delta}\xi_i \omega_i\Big)^2 \nonumber \\ 
 	& \leq \Delta t \Bigg( \sum^L_{i=1} \Big( I_h \phi_{t_{n+1}}(x_k+b(x_k,u_{t_n})\Delta t +\sigma(x_k,u_{t_n})\sqrt{\Delta t} \xi_i )\Big)^2 \omega_i \nonumber \\
 	&- \Big(\sum^L_{i=1}  I_h \phi_{t_{n+1}}(x_k+b(x_k,u_{t_n})\Delta t +\sigma(x_k,u_{t_n})\sqrt{\Delta t} \xi_i ) \omega_i \Big)^2 \Bigg) \label{lemma_first_stage} \\
 	& \leq \Delta t \Bigg( \sum^L_{i=1}  I_h \Big(\phi_{t_{n+1}}(x_k+b(x_k,u_{t_n})\Delta t +\sigma(x_k,u_{t_n})\sqrt{\Delta t} \xi_i )\Big)^2 \omega_i \nonumber \\ 
 	& +\Big(\sum^L_{i=1}  I_h \phi_{t_{n+1}}(x_k+b(x_k,u_{t_n})\Delta t +\sigma(x_k,u_{t_n})\sqrt{\Delta t} \xi_i ) \omega_i \Big)^2 \Bigg) \nonumber \\
 	& = \big( \hEkn[(\phi_{t_{n+1}})^2 ]-(\hEkn[\phi_{t_{n+1}}])^2 \big) \Delta t,
 \end{align}
where \eqref{lemma_first_stage} can be obtained by applying exactly the same argument as the Proofs in \cite{BCZ_2015, Bao_bdsde, BCZ_2018}.

For part 4 of the Lemma 
set: $\tilde{x}_k =x_k+b(x_k,u_{t_n})\Delta t +\sigma(x_k,u_{t_n})\sqrt{\Delta t} \xi_i $, then we have:
\begin{align}
	\hEkn[\phi_{t_{n+1}}] &= \sum^L_{i=1} I_h \phi_{t_{n+1}}(x_k+b(x_k,u_{t_n})\Delta t +\sigma(x_k,u_{t_n})\sqrt{\Delta t} \xi_i ) \omega_i \nonumber \\ 
	&=\sum^M_{k=1} \phi_{t_{n+1}}(\tilde{x}_k) \Phi(\tilde{x}_k, x_k ) \omega_i .
\end{align}
Then, by assumption since $\phi_{t_{n+1}}(\tilde{x}_k) \geq 0$	for any $\tilde{x}_k$, the assertion is proved. 
\end{proof}


The following result gives the approximation error of $\hE[\phi_{t_n}]$ in approximating the expectation $\E[\phi_{t_n}]$. 
\begin{lemma}{\label{operator_error}}
	Assume that $b , \sigma \in C^{0,4}_b$. For $\phi_t = \phi(t,x_t) \in C^{0,4}_b$, we define $\Phi_{t_i}(x)=\E^x_{ti}[\phi_{t_n}]$. Then it holds that $\Phi_{t_i} \in C^{0,4}_b$, and we have 
	\begin{equation}\label{Lem:exp}
	\E[\phi_{t_n}]=\hE[\phi_{t_n}] + \sum_{i=0}^{n-1} \hE[\hat{R}_{\Phi,i}],
	\end{equation}
	where $\hat{R}_{\Phi,i}=\E^{x_{t_i}}_{t_i}[\Phi_{t_{i+1}}]-\hE^{x_{t_i}}_{t_i}[\Phi_{t_{i+1}}]$, $1 \leq i \leq n$. 
\end{lemma}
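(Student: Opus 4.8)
The plan is to treat the two assertions separately. The regularity claim $\Phi_{t_i}\in C^{0,4}_b$ is the analytic content, whereas the representation \eqref{Lem:exp} is a purely algebraic telescoping once the definition \eqref{ehat} of $\hE$ is unpacked.

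First I would prove the regularity. Fixing the (deterministic) control, I write $\Phi_{t_i}(x)=\E[\phi(t_n,x^{t_i,x}_{t_n})]$, where $x^{t_i,x}_t$ denotes the solution of \eqref{SDE} started from $x$ at time $t_i$ with the time-dependent coefficients $b(\cdot,u(t))$ and $\sigma(\cdot,u(t))$. Since $b,\sigma\in C^{0,4}_b$, the theory of stochastic flows gives that $x\mapsto x^{t_i,x}_t$ is a.s.\ four times continuously differentiable and that the derivative processes $\partial_x x^{t_i,x}_t,\dots,\partial_x^4 x^{t_i,x}_t$ satisfy the associated linear variational SDEs, whose coefficients are the bounded derivatives of $b$ and $\sigma$. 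Gronwall-type estimates then yield moment bounds on these derivative processes that are uniform in $x$ and in $t_i\in[0,T]$. Differentiating under the expectation (justified by these uniform moment bounds together with dominated convergence) and applying the chain rule with the bounded derivatives of $\phi$ shows that $\Phi_{t_i}\in C^4_b(\Rd)$ with constants independent of $i$, and continuity in time is inherited from the flow. This route avoids any ellipticity assumption that a PDE/Schauder argument would require, and I expect it to be the heaviest step.

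For the identity I would use two structural facts. By the tower and Markov properties the exact value function satisfies the one-step recursion
\[
\Phi_{t_i}(x)=\E^x_{t_i}[\Phi_{t_{i+1}}],\qquad 0\le i<n,\qquad \Phi_{t_n}=\phi(t_n,\cdot),
\]
and each one-step approximate operator $\hE^x_{t_i}[\cdot]$ is \emph{linear}, since both the Gauss--Hermite quadrature and the interpolation $I_h$ act linearly on the integrand. I then introduce the hybrid quantities
\[
G_i:=\hE^{x_0}_{t_0}\hE_{t_1}\cdots\hE_{t_{i-1}}[\Phi_{t_i}],\qquad i=0,1,\dots,n,
\]
which interpolate between $G_0=\Phi_{t_0}(x_0)=\E[\phi_{t_n}]$ (the initial condition in \eqref{SDE} is deterministic) and $G_n=\hE[\phi_{t_n}]$ (because $\Phi_{t_n}=\phi_{t_n}$).

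Finally I telescope, $\E[\phi_{t_n}]-\hE[\phi_{t_n}]=\sum_{i=0}^{n-1}(G_i-G_{i+1})$, and use linearity to factor out the common leading operators $\hE^{x_0}_{t_0}\cdots\hE_{t_{i-1}}$:
\[
G_i-G_{i+1}=\hE^{x_0}_{t_0}\cdots\hE_{t_{i-1}}\big[\Phi_{t_i}-\hE_{t_i}[\Phi_{t_{i+1}}]\big]=\hE[\hat R_{\Phi,i}],
\]
because $\Phi_{t_i}(x)-\hE^x_{t_i}[\Phi_{t_{i+1}}]=\E^x_{t_i}[\Phi_{t_{i+1}}]-\hE^x_{t_i}[\Phi_{t_{i+1}}]$ is exactly $\hat R_{\Phi,i}$ viewed as a function of $x_{t_i}$, and the composite of the leading operators is precisely $\hE$ applied at time level $t_i$. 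Summing over $i$ yields \eqref{Lem:exp}. Apart from the regularity estimate, the only delicate point is the bookkeeping that identifies $\hat R_{\Phi,i}$ --- defined through evaluation at the random point $x_{t_i}$ --- with the function on which the composite operator acts.
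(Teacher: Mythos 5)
Your proof of the identity \eqref{Lem:exp} is correct, and it is essentially the paper's argument in unrolled form: the paper proceeds by induction on $n$, applying the inductive hypothesis to $\psi_{t_n}=\E^{x_{t_n}}_{t_n}[\phi_{t_{n+1}}]$ and peeling off the single term $\hE[\hat R_{\Phi,n}]$, whereas you telescope directly over the hybrid quantities $G_i=\hE^{x_0}_{t_0}\hE_{t_1}\cdots\hE_{t_{i-1}}[\Phi_{t_i}]$. Both arguments rest on exactly the same three ingredients --- the Markov/tower recursion $\Phi_{t_i}(x)=\E^x_{t_i}[\Phi_{t_{i+1}}]$, the linearity of each one-step operator $\hE_{t_i}$, and the composition structure in \eqref{ehat} --- so the content is the same; your presentation has the mild advantage of displaying the error decomposition in one closed formula rather than recovering it step by step, and of making explicit the bookkeeping point that $\hat R_{\Phi,i}$, defined by evaluation at $x_{t_i}$, is the function on which the truncated composite $\hE^{x_0}_{t_0}\cdots\hE_{t_{i-1}}$ acts. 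Where you genuinely go beyond the paper is the regularity claim: the lemma asserts $\Phi_{t_i}\in C^{0,4}_b$, but the paper's proof never addresses it, while you sketch a standard and appropriate route via stochastic flows (differentiability of $x\mapsto x^{t_i,x}_{t_n}$, variational SDEs with bounded coefficients, uniform moment bounds, differentiation under the expectation), which correctly avoids any ellipticity hypothesis. That sketch would need the moment estimates carried out to be complete, but the strategy is sound and fills a gap the paper leaves open.
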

\begin{proof}
	To prove the estimate \eqref{Lem:exp}, we use induction.
	
	First of all, we can see that \eqref{Lem:exp} holds for $n=1$. Suppose that \eqref{Lem:exp} holds for $n$, we need to check that the equality also works for $n+1$. 
	Define $\psi_{t_n}=\E_{t_n}^{x_{t_n}}[\phi_{t_{n+1}}]$, then we have
		\[
	\E[\psi_{t_n}]=\hE[\psi_{t_n}] + \sum_{i=0}^{n-1} \hE[\hat{R}_{\Phi,i}],
	\]
	where now $\Phi_{t_{i}}=\E^{x_{t_i}}_{t_i}[\E_{t_n}^{x_{t_n}}[\phi_{t_{n+1}}]]=\E^{x_{t_i}}_{t_i}[\phi_{t_{n+1}}]$ for $\hat{R}_{\Phi,i}$ in the above equation. And $\hat{R}_{\Phi,i}$ is still defined in the same way. Notice that $\E[\psi_{t_n}]=\E[\E_{t_n}^{x_{t_n}}[\phi_{t_{n+1}}]]=\E[\phi_{t_{n+1}}]$, we have 
	\begin{align*}
		\hE[\E_{t_n}^{x_{t_n}}[\phi_{t_{n+1}}]]&= \hE[\phi_{n+1}]+\hE[\E_{t_n}^{x_{t_n}}[\phi_{t_{n+1}}]]-\hE[\phi_{n+1}]\\
		&=\hE[\phi_{n+1}]+\hE[\E_{t_n}^{x_{t_n}}[\phi_{t_{n+1}}]-\hE_{t_n}^{x_{t_n}}[\phi_{t_{n+1}}]] \\
		&=\hE[\phi_{n+1}] + \hE[\hat{R}_{\Phi,n}].
	\end{align*}
	Hence we have shown that 
$$	\E[\phi_{t_{n+1}}]=\hE[\phi_{t_{n+1}}] + \sum_{i=0}^{n} \hE[\hat{R}_{\Phi,i}],$$
which completes the proof. 
\end{proof}

In order to carry out our numerical analysis for the BSDE, we need the following boundedness property
\begin{equation}{\label{hate}}
	\hat{\mathbb{E}}[|x_{t_n}|^m] \leq C(|x_0|^m+1),  \ \ m \geq 2,
\end{equation}
where $x_0$ is the initial state of the controlled process. 



In what follows, we prepare lemmas to prove the desired analysis \eqref{hate}, and we shall discuss MLS and RBF separately.

To proceed, we first recall the definition and some facts about MLS. \begin{definition}
	For $x \in \mathbb{R}^d$, the value $s_{\phi, X}(x)$ of the moving least squares interpolant is given by $s_{\phi, X}(x) = p^*(x)$ where $p^*$ is the polynomial that solves the following problem 
	\begin{equation}{\label{mls_initial_def}}
		\min_p \Big \lbrace \sum_{i=1}^N [\phi(x_i)-p(x_i)]^2 w(x,x_i): p \in \pi_l(\R^d) \Big \rbrace,
	\end{equation}
	\end{definition}
\noindent where $\omega(\cdot, \cdot)$ is a positive weight function, $\pi_l$ stands for the space of d-variate polynomials of absolute degree at most $l$. 
\begin{remark}
 It is usually assumed that the weight functions have compact support, so that the function value $\phi(x)$ is only affected by the values around it. 
Therefore, it makes sense to rewrite the above problem in the form: 
	\begin{equation} {\label{mls_reduce}}
		\min_p \Big \lbrace \sum_{i \in  \mathcal{I}(x)} [\phi(x_i)-p(x_i)]^2 w(x,x_i): p(x) \in \pi_l(\R^d) \Big \rbrace,
	\end{equation}
	where $\mathcal{I}(x) = \lbrace j \in \lbrace 1, ..., N\rbrace: ||x-x_j||_2 < r \rbrace$: all the points in the ball $B_{r}(x)$. 
\end{remark}

The following theorem shows that the interpolating function can be written in the Lagrangian form, and it satisfies the polynomial reproduction property. 

\begin{theorem}{\label{lagrange_form}}
	Suppose that for $x \in \mathbb{R}^d$ the set $\lbrace x_j: j \in  \mathcal{I}(x) \rbrace$ is $\pi_l(\R^d)$-unisolvent. In this situation, the problem setup is uniquely solvable and the solution $s_{\phi, X}=p^*(x)$ can be represented as
	\[
	s_{\phi, X} =\sum_{i \in \mathcal{I}(x)} a^*_i(x) \phi(x_i),
	\]
	where the coefficients $a_i^*(x)$ are determined by minimizing the quadratic form 
	\begin{align} \label{mls_lagrange}
		s_{\phi, X}(x) = \sum_{i \in \mathcal{I}(x)} a_i(x)^2 \frac{1}{\Phi (x-x_i)}
	\end{align}
	with the polynomial reproduction constraints
	\begin{equation}
		\sum_{j \in \mathcal{I}(x)} a_i(x) p(x_j) =p(x), \ p \in \pi_l(\R^d).
	\end{equation}
\end{theorem}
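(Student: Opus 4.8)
The plan is to prove the standard equivalence between the primal weighted least-squares definition of the moving least squares interpolant \eqref{mls_initial_def} and the dual (Backus--Gilbert) formulation \eqref{mls_lagrange}, following the treatment in \cite{wen}; the unisolvency hypothesis is the structural fact that drives the whole argument. To set up, I would fix a basis $\{P_1,\dots,P_Q\}$ of $\pi_l(\R^d)$ with $Q=\dim\pi_l(\R^d)$ and introduce, for the fixed evaluation point $x$, the local matrices $P=(P_j(x_i))_{i\in\mathcal{I}(x),\,1\le j\le Q}$, the positive diagonal weight matrix $W=\operatorname{diag}(w(x,x_i))_{i\in\mathcal{I}(x)}$ (writing $\Phi(x-x_i)=w(x,x_i)>0$ for the weight appearing in \eqref{mls_lagrange}), the vector $\mathbf{P}(x)=(P_1(x),\dots,P_Q(x))^\dagger$, and the data vector $\boldsymbol{\phi}$ with entries $\phi(x_i)$. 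In this notation the primal problem is $\min_{c\in\R^Q}(\boldsymbol{\phi}-Pc)^\dagger W(\boldsymbol{\phi}-Pc)$ and $s_{\phi,X}(x)=\mathbf{P}(x)^\dagger c^*$.

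First I would solve the primal problem: differentiating the quadratic objective in $c$ gives the weighted normal equations $P^\dagger W P\,c=P^\dagger W\boldsymbol{\phi}$. Here unisolvency of $\{x_j:j\in\mathcal{I}(x)\}$ enters decisively, since it forces $P$ to have full column rank $Q$; as $W$ is positive definite, the Gram matrix $P^\dagger W P$ is symmetric positive definite and hence invertible. This simultaneously gives unique solvability of the MLS problem and the closed form $c^*=(P^\dagger W P)^{-1}P^\dagger W\boldsymbol{\phi}$, whence $s_{\phi,X}(x)=\mathbf{P}(x)^\dagger(P^\dagger W P)^{-1}P^\dagger W\boldsymbol{\phi}$. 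Reading off the coefficient of each $\phi(x_i)$ identifies $a^*(x)=WP(P^\dagger W P)^{-1}\mathbf{P}(x)$.

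It then remains to check that this same $a^*(x)$ solves the dual problem of minimizing $\sum_{i\in\mathcal{I}(x)}a_i(x)^2/\Phi(x-x_i)=a^\dagger W^{-1}a$ subject to the polynomial reproduction constraints. The key reduction is to observe that requiring $\sum_i a_i(x)p(x_i)=p(x)$ for every $p\in\pi_l(\R^d)$ is, after expanding in the basis, equivalent to the finite linear system $P^\dagger a=\mathbf{P}(x)$. Introducing a Lagrange multiplier $\lambda\in\R^Q$ and setting the gradient of $a^\dagger W^{-1}a-2\lambda^\dagger(P^\dagger a-\mathbf{P}(x))$ to zero gives $a=WP\lambda$; substituting into the constraint yields $\lambda=(P^\dagger W P)^{-1}\mathbf{P}(x)$ and therefore $a=WP(P^\dagger W P)^{-1}\mathbf{P}(x)$, which coincides with the primal coefficients. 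Because the dual objective is strictly convex and the constraint set is affine, this stationary point is the unique minimizer, so the two formulations yield the same interpolant and the reproduction property holds. I expect the main obstacle to be conceptual rather than computational: pinning down that the same positive definite matrix $P^\dagger W P$ governs both the primal normal equations and the dual Lagrange system, so that the single invertibility statement furnished by unisolvency delivers unique solvability, the representation, and the equivalence all at once.
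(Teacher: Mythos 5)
Your proof is correct, but note that the paper itself contains no proof of this theorem: the remark following it simply defers to \cite{wen}. What you have written is, in substance, the standard primal--dual (Backus--Gilbert) equivalence argument from that reference, so you have supplied exactly the material the paper outsources. Your chain of reasoning is sound: unisolvency of $\lbrace x_j : j \in \mathcal{I}(x)\rbrace$ is equivalent to the design matrix $P$ having full column rank; positivity of the weights $w(x,x_i)$ for $i \in \mathcal{I}(x)$ (which holds because $\mathcal{I}(x)$ collects the points strictly inside the support of the weight in \eqref{mls_initial_def}) makes $W$ positive definite, hence $P^\dagger W P$ is symmetric positive definite and the normal equations have a unique solution; and the Lagrange-multiplier computation for the constrained quadratic problem produces the same coefficient vector $a^*(x) = W P (P^\dagger W P)^{-1}\mathbf{P}(x)$, with strict convexity of $a \mapsto a^\dagger W^{-1} a$ on the affine constraint set guaranteeing uniqueness. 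Two small points deserve explicit mention. First, before invoking the minimizer of the dual problem you should observe that its constraint set is nonempty; this follows because full column rank of $P$ makes $P^\dagger$ surjective, so once again unisolvency does the work. Second, you have silently (and correctly) repaired defects in the statement itself: equation \eqref{mls_lagrange} as printed equates $s_{\phi,X}(x)$ with the quadratic form $\sum_{i}a_i(x)^2/\Phi(x-x_i)$, whereas the intended meaning --- the one your proof uses and the one in \cite{wen} --- is that the coefficients $a_i^*(x)$ minimize that quadratic form subject to the reproduction constraints, while $s_{\phi,X}(x) = \sum_{i\in\mathcal{I}(x)} a_i^*(x)\phi(x_i)$; likewise the constraint equation should read $\sum_{j\in\mathcal{I}(x)} a_j(x)p(x_j) = p(x)$, with the summation index on $a$ matching that on $p(x_j)$. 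Making these corrections explicit would strengthen the write-up, but they do not affect the validity of your argument.
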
  

\begin{remark} 
We refer to \cite{wen} for the proof of the above theorem.
\end{remark}  
 The following lemma gives the spatial approximation accuracy of the MLS method, which will be used in the analysis for \eqref{hate}, and we assume the following statements are true:
 \begin{enumerate}
    \item $\Omega$ is compact and satisfies the interior cone condition with angle $\theta \in (0, \pi/2)$ and $r>0$ as defined in Definition 3.6 in \cite{wen}.
 	\item There exist $h_0, C_2$, such that $h<h_0$. $h_0, C_2$ are constants introduced in Theorem 3.14 in \cite{wen} which depend only on $\theta, r , l$, where $l$ is the order of the polynomial to be reproduced. 
 \end{enumerate}
 We remark that $C_2$ will be used in the next Lemma, and the interior cone condition we make on the compact set $\Omega$ enable us to do local polynomial reproduction, which is a key element in carrying out analysis on spatial approximation later. 
 
\begin{lemma}{\label{mls_h2}}
	Define $\Omega^*$ to be the closure of $\cup_{x \in \Omega}B(x,2 C_2 h_0)$. Then there exists a constant $c>0$ that can be computed explicitly such that for all $\phi \in C^{l+1}(\Omega^*)$ and all quasi-uniform $X \subseteq \Omega$ with $h \leq h_0$ the approximation is bounded as follows: 
	\begin{equation}\label{MLS_error}
		|| \phi - s_{\phi,X}  ||_{L_{\infty}} \leq h^{l+1} |\phi|_{C^{l+1}(\Omega^*)}
	\end{equation}
\end{lemma}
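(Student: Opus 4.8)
The plan is to combine the Lagrangian representation of the MLS interpolant from Theorem \ref{lagrange_form} with a local Taylor expansion, exploiting the polynomial reproduction property. Fix a point $x \in \Omega$. By Theorem \ref{lagrange_form}, the interpolant has the form $s_{\phi,X}(x) = \sum_{i \in \mathcal{I}(x)} a_i^*(x)\phi(x_i)$, and the coefficients reproduce every polynomial of degree at most $l$, that is $\sum_{i \in \mathcal{I}(x)} a_i^*(x)\, p(x_i) = p(x)$ for all $p \in \pi_l(\R^d)$.

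The first key step is to invoke the local polynomial reproduction estimate (Theorem 3.14 in \cite{wen}), which, under the interior cone condition on $\Omega$ and quasi-uniformity of $X$, guarantees two facts at once for all $h \leq h_0$: the coefficients are uniformly summable, $\sum_{i \in \mathcal{I}(x)} |a_i^*(x)| \leq C_1$ with $C_1$ independent of $x$ and $h$, and the stencil is localized, namely $a_i^*(x) = 0$ whenever $\|x - x_i\|_2 > C_2 h$. This is exactly where the constants $C_2, h_0$ and the enlarged set $\Omega^*$ enter: since $C_2 h \leq C_2 h_0 < 2C_2 h_0$, every node actually used in forming $s_{\phi,X}(x)$ for any $x \in \Omega$ lies in $\Omega^*$, which is why the regularity seminorm is taken over $\Omega^*$ rather than $\Omega$. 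I would briefly note that the MLS shape functions of Theorem \ref{lagrange_form} satisfy this reproduction bound, so these two results may be applied to the same coefficients $a_i^*(x)$.

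Next I would take $p$ to be the degree-$l$ Taylor polynomial of $\phi$ centered at $x$, so that $p(x) = \phi(x)$. Rewriting $\phi(x) = p(x) = \sum_i a_i^*(x) p(x_i)$ via reproduction gives
\[
\phi(x) - s_{\phi,X}(x) = \sum_{i \in \mathcal{I}(x)} a_i^*(x)\big(p(x_i) - \phi(x_i)\big).
\]
Taylor's theorem with Lagrange remainder bounds each term by $|p(x_i) - \phi(x_i)| \leq C\,\|x - x_i\|_2^{\,l+1}\,|\phi|_{C^{l+1}(\Omega^*)}$, and the support control $\|x - x_i\|_2 \leq C_2 h$ upgrades this to $|p(x_i) - \phi(x_i)| \leq C (C_2 h)^{l+1}\,|\phi|_{C^{l+1}(\Omega^*)}$. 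Combining with the coefficient summability yields
\[
|\phi(x) - s_{\phi,X}(x)| \leq \Big(\sum_{i \in \mathcal{I}(x)} |a_i^*(x)|\Big)\, \max_{i}\,|p(x_i) - \phi(x_i)| \leq C_1\, C\, C_2^{\,l+1}\, h^{l+1}\,|\phi|_{C^{l+1}(\Omega^*)}.
\]
Taking the supremum over $x \in \Omega$ collects all the constants into a single explicit $c > 0$, giving the stated $\oo(h^{l+1})$ bound.

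The main obstacle is the local polynomial reproduction estimate itself, i.e. the simultaneous uniform bound $\sum_i |a_i^*(x)| \leq C_1$ and the stencil localization $\|x - x_i\|_2 \leq C_2 h$. Establishing these requires the interior cone condition and quasi-uniformity and rests on a Markov/norming-set argument for polynomials on scattered nodes, with explicit tracking of constants; since this is precisely the content of Theorem 3.14 in \cite{wen}, I would cite it rather than reprove it. Everything downstream is the routine Taylor estimate sketched above.
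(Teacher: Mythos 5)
Your proposal is correct and follows essentially the same route as the paper's proof: the Lagrangian form from Theorem \ref{lagrange_form}, polynomial reproduction applied to the degree-$l$ Taylor polynomial of $\phi$ at $x$, and the uniform summability plus localization of the coefficients $a_i^*(x)$ from Wendland's local polynomial reproduction theory. Your write-up is in fact somewhat more explicit than the paper's sketch about where Theorem 3.14 of \cite{wen}, the constants $C_2, h_0$, and the enlarged set $\Omega^*$ enter, but the underlying argument is identical.
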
 


\begin{proof}
We sketch the general idea of the proof for the simplicity of presentation.

Let $p_{\phi} \in \pi_l (\R^d)$, then for some $M>0$
	\begin{align*}
		|  \phi(x)-s_{\phi,X} (x)| & \leq | \phi -p_{\phi}(x)| + |p_{\phi}(x)-\sum_{i \in I(x)} a^*_i (x) \phi(x_i)| \\
		&=|\phi-p_{\phi}(x)| + |\sum_{i \in \mathcal{I}(x)} a^*_i (x)p_{\phi}(x_i)-\sum_{i \in \mathcal{I}(x)} a^*_i (x) \phi(x_i)| \\
		& \leq |\phi-p_{\phi}(x)| + \sum_{i \in \mathcal{I}(x)} |p_{\phi}(x_i)-  \phi(x_i)| |a^*_i (x)| \\ 
		& \leq (1+ \sum_{i \in \mathcal{I}(x)}|a^*_i|)  ||\phi-p_{\phi}||_{L_{\infty}(B(x,Mh))} \leq C ||\phi-p_{\phi}||_{L_{\infty}(B(x,Mh))}
	\end{align*}
	where in the first inequality we used Theorem \ref{lagrange_form}, and the polynomial reproductive property in the equality followed.

	Now, if we pick the polynomial $p_{\phi}$ to be the Taylor expansion of $\phi$ at $x$, that is
	$$p_{\phi}(z) = \sum_{|\alpha| \leq l} \frac{D^{\alpha} \phi(x)}{\alpha!}(z-x)^{\alpha} $$
	where $D^{\alpha}, (z-x)^{\alpha}$ are the multi-index notations for the derivative operator and power, i.e. 
	$$D^{\alpha}:= \frac{\partial^{\alpha_1}}{\partial x_1^{\alpha_1}} \frac{\partial^{\alpha_2}}{\partial x_2^{\alpha_2}}...\frac{\partial^{\alpha_n}}{\partial x_n^{\alpha_n}},\qquad (z-x)^{\alpha}:=(z_1-x_1)^{\alpha_1}...(z_n-x_n)^{\alpha_n}.$$

	Then, for $\xi \in B(x, Ch)$,  we have the following estimate
	$$|\phi(x)-s_{\phi,X}(x)| \leq C \sum_{|\alpha|=l+1} \frac{||D^{\alpha} \phi(\xi)||_{L_{\infty}(B(x,Mh))}}{\alpha!} |\xi-x|^{\alpha} \leq C h^{l+1} |\phi|_{C^{l+1}(\Omega^*)},$$
	where the ball contains all $x_i \in \mathcal{I}(x)$ (the data points) that locally reproduce the polynomial, and $\# \lbrace i | i \in \mathcal{I}(x)\rbrace$ is an integer which is finite and uniformly bounded for all $x$.
	\end{proof}

In order to carry out approximation analysis for the RBF method, one would need a similar spatial interpolation accuracy result similar to Lemma \ref{mls_h2}. To this end, we recall the definition of a radial function and discuss the RBF approximation in what follows. 
\begin{definition}
	A function $\Psi(\R^d) \rightarrow \R$ is called radial if there exists a univariate function $\psi: \R \rightarrow \R $ such that 
	\begin{equation}
		\Psi(x) = \psi(r), \ \ ||x|| =r 
	\end{equation}
	where $||\cdot||$ denotes the Euclidean norm. 
\end{definition}
The following process defines a radial basis interpolating function.

Suppose we have a function $\Psi$ (a radial basis) that is $\cpd$ of order $l$, a data set $\lbrace \left( x_i, \phi({x_i})  \right)\rbrace_i^N$, and we form the following linear system of equations

\begin{equation} {\label{psetup}}
\begin{bmatrix} A & P \\ P^T & 0 \end{bmatrix}
\begin{bmatrix} v \\  z  \end{bmatrix}
 =
 \begin{bmatrix} \phi(X) \\  0  \end{bmatrix},
	\end{equation}
where $P= \lbrace p_j(x_i) \rbrace$ and $p_j \in \pi_{l-1}(\R^d)$, and the set $A=\lbrace \Psi_{i,j} \rbrace$ forms a basis of $\pi_{l-1}(\R^d)$. 
with $v$, $z$ solved through \eqref{psetup}, the RBF interpolation is implemented as following  
\begin{equation}{\label{rbf_f}}
	s_{\phi, X}(x)=\sum_{j=1}^N v_j \Phi(||x-x_j||)+\sum_k z_k p_k(x), \  p_k \in \pi_{l-1}(\R^d).
\end{equation}

Then given that the target function $\phi$ has enough regularity and the fill distance $h$ is small in a bounded domain $\Omega$, one would expect that a similar interpolation accuracy result like Lemma \ref{mls_h2} also holds. In fact, we have the following lemma for RBF approximation.
\begin{lemma}{\label{thinplate}}
	Let $l > m+ d/2$. Suppose that $\Omega \subseteq \R^d$ is open and bounded and satisfies the interior cone condition. Consider the thin-plate splines $\ff_{d,l}$ as $\cpd$ of order $l$. Then the error  between $\phi \in H^l(\Omega)$ and its interpolant $s_{\phi,X}$, which is given in the form of \eqref{rbf_f}, can be bounded by 
	\begin{equation}{}
		|\phi-s_{\phi,X}|_{W^m_p(\Omega)} \leq C h^{l-m-d(1/2-1/p)_+} |\phi|_{BL_{l}(\Omega)}
	\end{equation}
	for $1 \leq p \leq \infty$, and $BL_l(\Omega)$ stands for the Beppo Levi space of order $l$. 
\end{lemma}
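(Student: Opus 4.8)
The plan is to follow the classical native-space (variational) route for conditionally positive definite interpolation, reducing the entire estimate to a single sampling inequality applied to the interpolation error. Write $e := \phi - s_{\phi,X}$. The argument rests on three facts: (i) a minimal-seminorm characterization of the thin-plate interpolant in the Beppo Levi native space, controlling $|e|_{BL_l(\Omega)}$ by $|\phi|_{BL_l(\Omega)}$; (ii) the interpolation property $e(x_k)=0$ for every $x_k \in X$; and (iii) a sampling inequality that converts these facts, together with the fill distance $h$ from \eqref{fill_distance}, into the stated $W^m_p$ bound.

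First I would establish the variational property. Because $\ff_{d,l}$ is \cpd of order $l$, the interpolant $s_{\phi,X}$ of the form \eqref{rbf_f} is the unique minimizer of the native-space seminorm over all functions in $\nphi$ that interpolate $\phi$ on $X$; for thin-plate splines this native space coincides, modulo the polynomials $\pi_{l-1}(\R^d)$ appearing in \eqref{rbf_f}, with the Beppo Levi space $BL_l(\Omega)$, whose seminorm is comparable to the $H^l(\Omega)$ seminorm. Minimality yields the Pythagorean orthogonality
\begin{equation}
|e|_{BL_l(\Omega)}^2 = |\phi|_{BL_l(\Omega)}^2 - |s_{\phi,X}|_{BL_l(\Omega)}^2 \le |\phi|_{BL_l(\Omega)}^2,
\end{equation}
so in particular $|e|_{BL_l(\Omega)} \le |\phi|_{BL_l(\Omega)}$, while by construction $e$ vanishes at all data points.

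Second, I would apply a sampling inequality for functions vanishing on $X$. The interior cone condition on $\Omega$ permits local polynomial reproduction: around each $x \in \Omega$ one selects a $\pi_{l-1}(\R^d)$-unisolvent cluster of nearby data points and forms a local averaged Taylor polynomial, in the same spirit as the construction behind Lemma \ref{mls_h2}. Since $e$ vanishes at those points, a Bramble--Hilbert / Taylor-remainder estimate on balls of radius comparable to $h$ gives, for all sufficiently small $h$,
\begin{equation}
|e|_{W^m_p(\Omega)} \le C\, h^{\,l-m-d(1/2-1/p)_+}\, |e|_{H^l(\Omega)} .
\end{equation}
Combined with the first step this produces the claimed estimate, with $|\phi|_{BL_l(\Omega)}$ on the right. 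For $p=\infty$ one could alternatively bypass the sampling inequality and bound the pointwise derivative error directly through the generalized power function $\powera$, estimating it by $h^{\,l-m}$; this matches the exponent since $d(1/2-1/p)_+=0$ there.

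The hard part will be the sampling inequality, specifically the sharp exponent $l-m-d(1/2-1/p)_+$. Everything surrounding it is bookkeeping once the native-space framework is in place, but I must control the local polynomial-reproduction constants uniformly in $x$ under the cone condition, track the scaling of the local balls with $h$, and handle the $L_2$-to-$L_p$ transition correctly --- the factor $d(1/2-1/p)_+$ encodes precisely that no loss occurs for $p \le 2$ while a loss of $d(1/2-1/p)$ derivatives is incurred for $p>2$ when passing from the $L_2$-based seminorm to an $L_p$ norm on balls of size $h$. These technical estimates, together with the unisolvency construction and the zeros-on-$X$ inequality, I would import directly from \cite{wen} rather than reprove.
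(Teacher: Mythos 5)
Your proposal is correct and takes essentially the same route as the paper: the paper offers no proof of its own, deferring entirely to \cite{wen}, Chapter 10, and your sketch --- the minimal native-space-seminorm characterization of the thin-plate interpolant in the Beppo Levi space, the Pythagorean bound $|\phi-s_{\phi,X}|_{BL_l(\Omega)} \leq |\phi|_{BL_l(\Omega)}$, and a cone-condition sampling (zeros) inequality applied to the error, which vanishes on $X$ --- is precisely the argument developed in that reference. Since you likewise import the sharp sampling estimate with exponent $l-m-d(1/2-1/p)_+$ from \cite{wen}, the two treatments coincide in substance.
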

\begin{remark}
We refer the readers to \cite{wen} chapter $10$ for the proof.  	
\end{remark}


To see how the above lemma works for example, we take $m=0$, $p=\infty$, $d=2$, and for $\phi \in C^4(\Omega)$, let's take $l =3$. 
	 We have
	\begin{equation}\label{thin_plate_spatial}
		|\phi-s_{\phi,X}|_{L_{\infty}(\Omega)} \leq C h^{2} |\phi|_{BL_{3}(\Omega)}
	\end{equation}
That is, we obtain a second order spatial accuracy approximation $s_{\phi,X}$ for $\phi$ on $\Omega$ in 2D. We point out that this is the set of parameters we take in Section 5 when carrying out numerical experiments for 2D problems. 
We also want to comment that it is sometimes preferable to use the thin-plate splines over the Gaussian kernels as the radial basis, because it's easier to obtain higher approximation accuracy for a wider range of target functions. 

The following theorem shows that the desired estimate \eqref{hate} holds for both MLS and RBF.

\begin{theorem}{\label{lip}}
Under standard assumptions for the MLS interpolation, and pick any RBF interpolation on a bounded domain $\Omega$ with $\oo(h^2)$ spatial accuracy satisfying an estimate like \eqref{thin_plate_spatial},  then for $m \geq 2$, $L \geq 2$, and $h\sim \mathcal{O} (\sqrt{\Delta t})$, the inequality \eqref{hate} holds. 
\end{theorem}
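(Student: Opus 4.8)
The plan is to treat \eqref{hate} exactly as one proves discrete moment bounds for the Euler--Maruyama scheme, viewing $\hat{\mathbb{E}}$ as a monotone (sub-)expectation and exploiting the structural properties established in Lemma~\ref{assumptions}. Since lower moments are controlled by higher ones via H\"older's inequality, it suffices to take $m$ to be an even integer, so that $\phi(x):=|x|^{m}$ is a smooth polynomial. Writing $m_{n}:=\hat{\mathbb{E}}[|x_{t_{n}}|^{m}]$ and $g_{n}(x):=\hat{\mathbb{E}}^{x}_{t_{n}}[\,|\cdot|^{m}\,]$, the definition \eqref{ehat} together with part~1 of Lemma~\ref{assumptions} gives $m_{n+1}=\hat{\mathbb{E}}[g_{n}(x_{t_{n}})]$. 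Because each $\hat{\mathbb{E}}^{x}_{t_{n}}$ is linear and monotone with $\hat{\mathbb{E}}^{x}_{t_{n}}[c]\le c$ for constants $c>0$ (from $\Phi_{k}\ge 0$, $\sum_{k}\Phi_{k}\le 1$, and $\sum_{l}\prod_{j}\omega^{j}_{l_{j}}=1$), the whole problem reduces to a pointwise one-step estimate
\begin{equation*}
g_{n}(x)\le (1+C\Dt)\,|x|^{m}+C\Dt+C\,h^{2},\qquad x\in\Omega .
\end{equation*}
Granting this, monotonicity and linearity of $\hat{\mathbb{E}}$ yield $m_{n+1}\le(1+C\Dt)m_{n}+C\Dt+C h^{2}$, and the hypothesis $h\sim\oo(\sqrt{\Dt})$ converts the spatial term into $Ch^{2}\sim C\Dt$, so $m_{n+1}\le(1+C\Dt)m_{n}+\tilde C\Dt$; discrete Gr\"onwall with the base case $m_{0}=|x_{0}|^{m}$ then gives $m_{n}\le e^{CT}(|x_{0}|^{m}+\tilde C/C)$, which is \eqref{hate}.

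The bulk of the work is the one-step estimate. Abbreviating the Euler node $\tilde{x}_{l}(x)=x+b(x,u(t_{n}))\Dt+\sum_{j}\sigma_{\cdot,j}(x,u(t_{n}))\sqrt{\Dt}\,\xi^{j}_{l_{j}}$, I would split
\begin{equation*}
g_{n}(x)=\sum_{l}|\tilde{x}_{l}(x)|^{m}\prod_{j}\omega^{j}_{l_{j}}+\sum_{l}\big(I_{h}\phi-\phi\big)(\tilde{x}_{l}(x))\prod_{j}\omega^{j}_{l_{j}} .
\end{equation*}
The second (interpolation) sum is bounded in absolute value by $\|I_{h}\phi-\phi\|_{L_\infty}\sum_{l}\prod_{j}\omega^{j}_{l_{j}}$, and Lemma~\ref{mls_h2} for MLS (respectively Lemma~\ref{thinplate}/\eqref{thin_plate_spatial} for RBF) bounds this by $C h^{2}|\phi|_{C^{l+1}(\Omega^{*})}$, a fixed $\oo(h^{2})$ quantity since the seminorm of $\phi=|\cdot|^{m}$ on the enlarged bounded domain $\Omega^{*}$ is finite. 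One technical point to verify is that the Euler nodes $\tilde{x}_{l}(x)$ remain in $\Omega^{*}$: as $b,\sigma$ are bounded on $\Omega$ and the fixed Gauss--Hermite nodes are bounded, $\tilde{x}_{l}(x)$ lies within $\oo(\sqrt{\Dt})$ of $\Omega$, which sits inside $\Omega^{*}$ once $h_{0}$ is chosen compatibly with $h\sim\oo(\sqrt{\Dt})$.

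The first sum is the tensorized Gauss--Hermite quadrature of $\zeta\mapsto|x+b\Dt+\sqrt{\Dt}\,\sigma\zeta|^{m}$ against the standard Gaussian, which I would estimate directly by expanding $|x+v|^{m}=(|x|^{2}+2\langle x,v\rangle+|v|^{2})^{m/2}$ with $v=b\Dt+\sqrt{\Dt}\,\sigma\zeta$. Summing the expansion against the weights, the terms odd in $\zeta$ vanish by the node symmetry of the quadrature; the quadratic-in-$\zeta$ term is integrated exactly once $L\ge 2$ (exactness up to degree $2L-1\ge 3$) and supplies the leading $\oo(\Dt)$ growth; and every remaining monomial carries a factor $(\sqrt{\Dt})^{\,|p|}$ with $|p|\ge 2$, hence is $\oo(\Dt)$ or smaller, the pieces of degree exceeding $2L-1$ contributing only extra $\oo(\Dt^{2})$ quadrature remainders. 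Using the linear growth $|b(x,u)|+|\sigma(x,u)|\le C(1+|x|)$ to control the coefficients then yields $\sum_{l}|\tilde{x}_{l}(x)|^{m}\prod_{j}\omega^{j}_{l_{j}}\le(1+C\Dt)|x|^{m}+C\Dt$, completing the one-step estimate.

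I expect the quadrature step to be the main obstacle: one must show that replacing the exact Gaussian expectation by the $L$-point Gauss--Hermite rule preserves the delicate $(1+C\Dt)|x|^{m}+C\Dt$ balance, which requires simultaneously tracking the symmetry-induced cancellation of the odd terms, the exactness of the rule on the quadratic term for $L\ge 2$, and the $\oo(\Dt^{2})$ smallness of the high-degree quadrature remainder — and then confirming that all three error scales (temporal, quadrature, and the $\oo(h^{2})$ interpolation error) are dominated by $\Dt$ under the scaling $h\sim\oo(\sqrt{\Dt})$, so that the Gr\"onwall recursion closes with a constant independent of $n$, $N$, and $h$.
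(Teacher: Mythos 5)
Your proposal is correct and follows essentially the same route as the paper's proof: both rest on a one-step estimate that splits $\hEkn[|x_{t_{n+1}}|^m]$ into the Gauss--Hermite quadrature of the exact function $|\cdot|^m$ plus an interpolation error of size $\oo(h^2)$ (via Lemma~\ref{mls_h2} for MLS and \eqref{thin_plate_spatial} for RBF), after which the scaling $h\sim\oo(\sqrt{\Dt})$ turns the spatial error into $\oo(\Dt)$ and \eqref{hate} follows by iterating the one-step bound with the tower property. If anything, your treatment of the quadrature sum --- odd-term cancellation by node symmetry together with exactness of the rule for $L\ge 2$ --- is tighter than the paper's, which expands $|a_1+a_2|^m$ with absolute-value inequalities and absorbs a cross term of order $\sqrt{\Dt}\,|x_k|^{m-1}$ without further justification.
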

\begin{proof} 
		 By Lemma \ref{mls_h2}, the MLS method provides local polynomial reproduction. 
		Then, by the triangle inequality, we obtain 
		\begin{align}{\label{b1}}
		|I_h |x|^m| \leq |x|^m + ||x|^m -I_h |x|^m| &\leq |x|^m + C\sum_{|\alpha|=2}  \frac{||D^{\alpha}|\xi|^{m}||_{L_{\infty}(B(x,Mh))}}{\alpha !} (\Delta x )^{\alpha}\\
		&\leq |x|^m + C || \ |\xi|^{m-2}||_{L_{\infty}(B(x,Mh))}  |\Delta x|^2
	\end{align}
where for some fixed constant $M$,
\begin{equation}
	 \ \Delta x=y-x,  \ y \in  B(x, Mh),  \ \xi= \gamma x +(1-\gamma) \Delta x , \text{where\ } \gamma \in [0,1].
\end{equation}
	By using the triangle inequality, we have 
	$$ |\xi| \leq |x| + |\Delta x|$$
	Hence, the following estimate holds, 
    \begin{align}
		|x|^m + C || \ |\xi|^{m-2}||_{L_{\infty}(B(x,Mh))}  |\Delta x|^2 & \leq |x|^m + C((|x|+|\Delta x|)^m+1)(\Delta x)^2 \nonumber \\
		 & \leq |x|^m + C(|x|^m + C |\Delta x| (|x|^m+1^m)+1)(\Delta x)^2 \nonumber \\
		 & \leq (1+C|\Delta x|^2)|x|^m + C|\Delta x|^2, \label{est_interm}
	\end{align}
where in each of the inequalities above, we used the bound $|x| \leq max(1, |x|)$, treating $|\Delta x|$ to be small, and the Binomial expansions. We point out that the constant $C$ at each step is only a function of $m$ and $d$ and $M$. 

For the RBF method, under the assumption that the domain $\Omega$ is bounded, we can derive that an inequality similar to \eqref{est_interm} holds. Specifically, we have that the RBF interpolation gives us 
\begin{align}
	|I_h |x|^m| & \leq |x|^m + ||x|^m -I_h |x|^m| \nonumber  \\ 
	& \leq (1+C|\Delta x|^2)|x|^m+C|\Delta x|^2, 
\end{align}
where we have used the result in the estimate \eqref{thin_plate_spatial} for the RBF interpolation.

The above arguments deal with the bound on the interpolation function, and now we shall evaluate the bound of the state process. To proceed, we consider the simulated values of the state
	\begin{equation}
		x_{k,l_1...l_d}=x_k+b(x_k, u(t_n)) \Delta t + \sum_j \sigma_{\cdot, j}(x_k, u(t_n)) \sqrt{\Delta t} \xi^j_{l_j},
	\end{equation}
where $x_k$ is a spatial point for $x_{t_n}$, i.e. $x_{t_n} = x_k$.	
	We let $a_1=x_k+b(x_k, u(t_n)) \Delta t $, and $a_2= \sum_j \sigma_{\cdot, j}(x_k, u(t_n)) \sqrt{\Delta t} \xi^j_{l_j}$ represent the diffusion. 
	Then, we can deduce by using basic inequalities to get
	\begin{align}
		|x_{k,l_1...l_d}|^m=|a_1+a_2|^m \leq |a_1|^m + m |a_1|^{m-2} d|a_1| |a_2| + m^2 d^2 (|a_1|+|a_2|)^{m-2}|a_2|^2.
	\end{align}
	Also, we have the following bounds for $a_1$ and $a_2$: 
	$$|a_1|^m \leq ((1+C\Dt)|x_k|+C \Dt)^m \leq (1+C \Dt)^m|x_k|^m +  ((1+C \Dt)|x_k|+1)^m C \Dt$$
	$$|a_2|^m \leq (1+C \Dt)|x_k|^m + (|x_k|+1)^m C \Dt \leq (1+C \Dt)|x_k|^m +  C \Dt.$$
	By the Lipschtiz property of both $\sigma$ and $b$, we have for small $\Delta t$,
	$$|a_1|+|a_2| \leq C(|x_k|+1)$$
	By choosing $|\Delta x| \sim \oo(h)$ and letting $h$ be of order $\mathcal{O}(\sqrt{\Delta t}) $, we derive a bound for $ \hEkn[|x_{t_{n+1}}|^m]$ as following
	\begin{align*}
		\hEkn[|x_{t_{n+1}}|^m] &=\sum_{l_1=1}^L...\sum_{l_d=1}^L I_{h} |x_{k,l_1...l_d}|^m \prod^d_{j=1} \omega^j_{l_j} \\
		&=(1+C|\Dx|^2)\sum_{l_1=1}^L...\sum_{l_d=1}^L |x_{k,l_1...l_d}|^m \prod^d_{j=1} \omega^j_{l_j}  + C|\Dx|^2 \\
		& \leq (1+C\Dt)((1+C \Dt)|x_k|^m+ C \Dt + \big( (1+C \Dt)|x_k|^{m-1}\\
		&+  C \Dt \big) |a_2| + C(|x_k|^m+1) \Dt) + C \Dt \\
		& \leq (1+C\Dt)((1+C \Dt)|x_k|^m +C \Dt) +C\Dt .
	\end{align*}
	
	Then, by using the estimates on the interpolation operator, the above estimate and the tower property, we obtain the following result: 
	\begin{align*}
		\hEkn[|x_{t_{n+1}}|^m] &=\hat{\mathbb{E}}[ \hat{\mathbb{E}}_{t_n}[|x_{t_{n+1}}|^m]] \\ 
		& \leq (1+C\Dt) ((1+ C\Dt) \hat{\mathbb{E}}[|x_{t_n}|^m]+C \Dt) + C\Dt \\
		& \leq (1+C\Dt)^{n+1} (1+C\Dt)^{n+1} (|x_0|^m+(n+1)C \Dt +(n+1)C \Dt\\
		& \leq C (|x_0|^m+1)
	\end{align*}
	where we also used the fact that $\Delta t \sim 1/N$ and $|\Delta x| \sim \mathcal{O} (\sqrt{\Delta t})$. 
\end{proof}

\subsection{Convergence analysis for the optimal control}
Recall that numerical solutions $\bp_n,\bq_n$ are calculated through \eqref{pde_num} with approximations \eqref{pwpq}. 
The error $\bmu_n= \bp_{t_n}-\bp_n, \bnu_n=\bq_{t_n}-\bq_n$ are of interest, where $(\bp_{t_n},\bq_{t_n})$ are solutions of the FBSDEs system \eqref{SDE}-\eqref{BSDE}.  

Hence, by subtracting (\ref{pde_num}) from (\ref{pde_exact}), we obtain the following system of equations: 
\begin{equation*}\label{sys}
\begin{aligned}
	\bmu_n^k&:=\hEkn[\bmu_{n+1}]+ \Dt \delta f^k_n +{\bR^{x_k}_{p,n}}, \text{ } \bmu_N^k:=\bp^{x_k}_{t_N}-\bp^{x_k}_{N}\\
	\bnu_n^k&:=\frac{1}{\Dt}(\hEkn[\bmu_{n+1}\Delta W^T_{n+1}]+{\bR^{x_k}_{q,n}}),
\end{aligned}	
\end{equation*}
where $\delta f_n^{x_k}=f(x_k,\bp^{x_k}_{t_{n}},\bq^{x_k}_{t_{n}},u(t_n)) - f(x_k,  \bp^{x_k}_n, \bq_n^{x_k}, u(t_n))$, and we recall the error terms ${\bR^{x}_{p,n}}$ and ${\bR^{x}_{q,n}}$ defined in \eqref{condit} - \eqref{pde_num}.

The following lemma gives the bounds for the error terms $\mu_n, \nu_n$ with respect to temporal-spatial truncation errors presented in \eqref{pde_exact}, and we refer readers to \cite{main} (Lemma 4.4) for the proof of the lemma. 
\begin{lemma}{\label{telescope}}
	Assuming that the function $f(x,p,q,u)$ is uniformly Lipschtiz in $x, u$ with respect to $p,q$, then the following estimate holds
	\begin{equation}\label{Error_Truncations}
		\hE[|\bmu_n|^2]+ \Dt \sum_{n=0}^{N-1} \hE[|\bnu_n|^2] \leq C \hE[|\bmu_N|^2] + \frac{C}{\Dt} \sum_{n=0}^{N-1} \hE[|\bR_{p,n}|^2+|\bR_{q,n}|^2].
	\end{equation}
	\end{lemma}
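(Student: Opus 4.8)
The plan is to run the standard BSDE energy estimate, but with the genuine conditional expectation replaced by the meshfree operator $\hEkn$, exploiting parts 1--3 of Lemma \ref{assumptions} as surrogates for the tower property, conditional Jensen, and the It\^o isometry. First I would rewrite the $\bnu_n$ relation using part 3 of Lemma \ref{assumptions}: since $\Dt\,\bnu_n^k = \hEkn[\bmu_{n+1}\Delta W^T_{n+1}] + \bR^{x_k}_{q,n}$, squaring, splitting with Young's inequality, and applying $|\hEkn[\bmu_{n+1}\Delta W^T_{n+1}]|^2 \le (\hEkn[|\bmu_{n+1}|^2] - |\hEkn[\bmu_{n+1}]|^2)\Dt$ yields
\begin{equation*}
|\hEkn[\bmu_{n+1}]|^2 \le \hEkn[|\bmu_{n+1}|^2] - \tfrac{\Dt}{2}|\bnu_n^k|^2 + \tfrac{C}{\Dt}|\bR^{x_k}_{q,n}|^2.
\end{equation*}
This is the step that manufactures the negative $|\bnu_n|^2$ contribution needed to place $\Dt\sum_n \hE[|\bnu_n|^2]$ on the left-hand side of the final bound.

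Next I would square the $\bmu_n$ relation $\bmu_n^k = \hEkn[\bmu_{n+1}] + \Dt\,\delta f_n^k + \bR^{x_k}_{p,n}$ and apply Young's inequality with a free parameter $\gamma>0$, giving $|\bmu_n^k|^2 \le (1+\gamma\Dt)|\hEkn[\bmu_{n+1}]|^2 + (1+\tfrac{1}{\gamma\Dt})|\Dt\,\delta f_n^k + \bR^{x_k}_{p,n}|^2$. Substituting the previous display and invoking the Lipschitz hypothesis on $f$ through $|\delta f_n^k| \le L(|\bmu_n^k| + |\bnu_n^k|)$ produces a recursion in which $|\bmu_n^k|^2$ and $|\bnu_n^k|^2$ occur on both sides. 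The essential calculation is to tune $\gamma$ (proportional to $L^2$) so that the positive $O(\Dt)$ multiple of $|\bnu_n^k|^2$ coming from the Lipschitz term is dominated by the negative $\tfrac{\Dt}{2}|\bnu_n^k|^2$ term, leaving a net coefficient of order $\Dt$ with the correct sign; the stray $O(\Dt^2)|\bmu_n^k|^2$ term is then absorbed into the left for $\Dt$ small. Collecting constants and using part 2 of Lemma \ref{assumptions} one reaches the one-step inequality
\begin{equation*}
|\bmu_n^k|^2 + c\,\Dt|\bnu_n^k|^2 \le (1+C\Dt)\,\hEkn[|\bmu_{n+1}|^2] + \tfrac{C}{\Dt}\big(|\bR^{x_k}_{p,n}|^2 + |\bR^{x_k}_{q,n}|^2\big).
\end{equation*}

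Finally I would apply the global operator $\hE$, using the tower property (part 1 of Lemma \ref{assumptions}) to collapse $\hE[\hEkn[|\bmu_{n+1}|^2]]$ into $\hE[|\bmu_{n+1}|^2]$, and then run a discrete Gr\"onwall argument: iterating the recursion backward accumulates a factor $(1+C\Dt)^{N} \le e^{CT}$ that stays uniformly bounded, while the $\Dt|\bnu_n|^2$ terms aggregate into $\Dt\sum_{n=0}^{N-1}\hE[|\bnu_n|^2]$. This delivers \eqref{Error_Truncations}. I expect the main obstacle to be the sign-tracking in the $\gamma$-tuning step: the whole estimate hinges on the negative $\bnu_n$ term from part 3 surviving after the Lipschitz contribution is added back, so the admissible range of $\gamma$ and the smallness threshold on $\Dt$ must be pinned down carefully. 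The replacement of genuine conditional expectations by $\hEkn$ is otherwise harmless, precisely because parts 1--3 of Lemma \ref{assumptions} reproduce the three analytic properties the classical proof relies on.
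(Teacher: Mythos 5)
Your proposal is correct and matches the paper's intended argument: the paper itself defers this proof to Lemma 4.4 of \cite{main}, and that proof is exactly the energy estimate you describe, with parts 1--3 of Lemma \ref{assumptions} (plus the monotonicity in part 4, needed to apply $\hE$ to pointwise inequalities) serving as the surrogates for the tower property, conditional Jensen, and the It\^o-isometry bound so that the classical argument transplants verbatim to the meshfree operator. The only cosmetic slip is that the stray $|\bmu_n^k|^2$ term you absorb has coefficient $\mathcal{O}(\Dt)$ rather than $\mathcal{O}((\Dt)^2)$, which is equally harmless for the Gr\"onwall step.
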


Now we shall provide an estimate for the error term $\frac{1}{\Dt} \sum_{n=0}^{N-1} \hE[|\bR_{p,n}|^2+|\bR_{q,n}|^2]$ on the right hand side of \eqref{Error_Truncations}, which will give us the main convergence results of this paper. 
\begin{lemma}{\label{rpq_estimate}}
	Assuming the Lipschitz continuity on the drift and the diffusion in the forward SDE, and assume that$b(\cdot, \omega), \sigma(\cdot, \omega) \in C^4_b$, $f(\cdot,\cdot,\cdot,\omega) \in C^{2,2,2}_b$ hold uniformly for all $\omega \in \mathcal{C}$, and  $\bp \in C^{1,4}_b$.

	Then the following estimates hold: 
	\begin{equation}
		\frac{1}{\Delta t} \sum_{n=0}^{N-1} \hat{E}[|\bR_{p,n}|^2+|\bR_{q,n}|^2] = \mathcal{O}((\Delta t)^2) + \mathcal{O}(h^4/(\Dt)^2)
	\end{equation}
\end{lemma}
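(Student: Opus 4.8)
The plan is to split each residual according to its natural four-way decomposition, bound the four pieces separately by their orders in $\Dt$ and $h$, and then assemble. Recall from \eqref{condit} and the text following \eqref{pde_exact} that $\bR^x_{p,n}=\bar{\bR}^x_{p,n}+\tilde{\bR}^x_{p,n}+\bR^x_{E,p,n}+\bR^x_{I,p,n}$, with the analogous decomposition for $\bR^x_{q,n}$. Using the elementary inequality $(a+b+c+d)^2\le 4(a^2+b^2+c^2+d^2)$, it suffices to control $\hE[|\cdot|^2]$ for each of the four constituents; the full estimate then follows by summing the $N=T/\Dt$ time levels and multiplying by $1/\Dt$. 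Throughout I expect the worst state-dependence to be at most linear, which will be absorbed at the very end by the moment bound \eqref{hate}.

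First I would handle the time-truncation errors $\bar{\bR}^x_{p,n}$ and $\bar{\bR}^x_{q,n}$, which are the left-point quadrature errors for the smooth time integrands in \eqref{exactp} and \eqref{exactq}. Under $f\in C^{2,2,2}_b$ and $\bp\in C^{1,4}_b$ these integrands are $C^1$ in time with derivatives of at most linear growth in the state, so a first-order Taylor expansion gives $|\bar{\bR}^x_{\cdot,n}|\le C(1+|x|)(\Dt)^2$; this is the standard BSDE time-discretization estimate, for which I would cite \cite{main, BCZ_2015, BCZ_2018}. Next, the one-step Euler–Maruyama weak errors $\tilde{\bR}^x_{p,n},\tilde{\bR}^x_{q,n}$ are $\oo((\Dt)^2)$ (again with at most linear growth) because $b,\sigma\in C^4_b$ and the relevant integrands are smooth. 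Finally, the Gauss–Hermite errors $\bR^x_{E,p,n},\bR^x_{E,q,n}$ are exact for polynomials of degree $\le 2L-1=3$ in the integration variable $\xi$; since each factor of $\xi$ carries a $\sqrt{\Dt}$ from the diffusion scaling in \eqref{cdep}, the leading error is of size $(\sqrt{\Dt})^{2L}=(\Dt)^L=\oo((\Dt)^2)$ for $L\ge 2$.

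The genuinely new step is the spatial interpolation error. For $\bR^x_{I,p,n}$ every summand is $(\bp_{t_{n+1}}-I_h\bp_{t_{n+1}})$ evaluated at a quadrature node and weighted by $\prod_j\omega^j_{l_j}$; since $\bp\in C^{1,4}_b$, I would invoke Lemma \ref{mls_h2} (for MLS) and \eqref{thin_plate_spatial}/Lemma \ref{thinplate} (for RBF) to get $\|\bp_{t_{n+1}}-I_h\bp_{t_{n+1}}\|_{L_\infty}\le C h^2$ uniformly, and because the weights satisfy $\sum_l\prod_j\omega^j_{l_j}\le 1$ this yields $|\bR^x_{I,p,n}|\le C h^2$. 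The $q$-term $\bR^x_{I,q,n}$ carries the extra factor $\sqrt{\Dt}\,\zeta^k_{l_k}$, so $|\bR^x_{I,q,n}|\le C h^2\sqrt{\Dt}\le Ch^2$. Collecting the four bounds produces the pointwise estimate $|\bR^x_{p,n}|,|\bR^x_{q,n}|\le C(1+|x|)\big((\Dt)^2+h^2\big)$.

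It then remains to take $\hE[|\cdot|^2]$, sum, and scale. Applying the monotonicity and sub-averaging properties of Lemma \ref{assumptions} together with the moment bound \eqref{hate} from Theorem \ref{lip} (valid in the regime $h\sim\oo(\sqrt{\Dt})$), and using $2(\Dt)^2h^2\le(\Dt)^4+h^4$, one obtains $\hE[|\bR_{p,n}|^2+|\bR_{q,n}|^2]\le C(1+|x_0|^2)\big((\Dt)^4+h^4\big)$ uniformly in $n$, whence
\[
\frac{1}{\Dt}\sum_{n=0}^{N-1}\hE[|\bR_{p,n}|^2+|\bR_{q,n}|^2]\le \frac{C}{\Dt}\cdot\frac{T}{\Dt}\big((\Dt)^4+h^4\big)=\oo((\Dt)^2)+\oo\!\big(h^4/(\Dt)^2\big).
\]
The main obstacle I expect is the spatial step: one must ensure the $\oo(h^2)$ accuracy holds uniformly in the evaluation point and the time level and survives the application of $\hE$ — this is exactly where the regularity $\bp\in C^{1,4}_b$, the interpolation estimates of Lemmas \ref{mls_h2} and \ref{thinplate}, and the moment bound \eqref{hate} must be combined — while simultaneously tracking the $\Dt$-powers carefully so that the $\sqrt{\Dt}$ gained from $\Delta W$ in the $q$-terms is not lost against the overall $1/\Dt$ prefactor.
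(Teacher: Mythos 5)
Your proposal is correct and follows essentially the same route as the paper's proof: the identical four-way decomposition into time-truncation, Euler, Gauss--Hermite, and interpolation errors, with the interpolation terms bounded by Lemma \ref{mls_h2} and \eqref{thin_plate_spatial}, the quadrature and time errors bounded at order $(\Dt)^2$, state-dependent constants absorbed via the moment bound \eqref{hate}, and the same final summation over $N\sim 1/\Dt$ levels. The only cosmetic difference is that you argue the Gauss--Hermite accuracy via polynomial exactness of degree $2L-1$ while the paper invokes the explicit remainder bound \eqref{hermite_acc}; these are equivalent.
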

\begin{proof}
To study the error terms $\bR^{x_k}_{p,n}$, $\bR^{x_k}_{q,n}$ (defined in \eqref{pde_exact}), we analyze the error of each term therein. 

	For the MLS case, in \eqref{MLS_error} from Lemma \ref{mls_h2}, one is able to locally reproduce the linear functions, which makes the interpolation error for both $\bR^{x_k}_{I,q,n}$ and $\bR^{x_k}_{I,p,n}$ order $\mathcal{O}(h^2)$, i.e. $|\bR^{x_k}_{I,q,n}|=\mathcal{O}(h^2)$ and $|\bR^{x_k}_{I,p,n}|=\mathcal{O}(h^2)$.
	
	For the RBF case, we known from the estimate \eqref{thin_plate_spatial} that the spatial approximation accuracy is of order $\oo(h^2)$. As a result, the interpolation error for both $\bR^{x_k}_{I,q,n}$ and $\bR^{x_k}_{I,p,n}$ are also of order $\mathcal{O}(h^2)$, i.e.  $|\bR^{x_k}_{I,q,n}|=\mathcal{O}(h^2)$ and $|\bR^{x_k}_{I,p,n}|=\mathcal{O}(h^2)$.
	 	
Let $\bR^{x_k,i}_{E,p,n}$ be the $i$-th component of the vector $\bR^{x_k}_{E,p,n}$, which is the error caused by the Gauss-Hermite interpolation. It is known that in 1D case, for some $\epsilon \in (0,1)$, the following error estimate holds: 
	\begin{equation} \label{hermite_acc}
		|\frac{1}{\sqrt{2\pi} } \int_{\R} f(\xi) e^{-\xi^2/2} d\xi - \sum_{l=1}^L f(\xi_l) \omega_l| \leq \frac{C L^{-r/2}}{\sqrt{2 \pi}} \int_{\R} |f^{(r)}(\xi)e^{-(1-\epsilon) \xi^2/2}| d\xi.
	\end{equation} 	
	Here $L$ is the number of interpolation points. In $d$ dimensions, the tensor scheme requires $L^d$ many points to achieve the same level of accuracy ($L$ in each dimension.) 
	Since $\bp_t, b , \sigma$ are assumed to be four times differentiable in space, by using a similar result of \eqref{hermite_acc} on 
	\[
	f(\xi)=\bp^i_{t_{n+1}}(x_{t_n}+ b (x_{t_n}, u_{t_n}) \Dt+ \sigma(x_{t_n}, u_{t_n})\sqrt{\Dt}\xi)
	\] 
	and the fact
	\begin{equation}
		\sum_{|\alpha|=4}|\frac{\partial^{\alpha} f(\xi)}{\partial^{\alpha_1} \xi_1 ... \partial^{\alpha_d} \xi_d}| \leq C |\sigma(x_k, u(t_n))|^4 (\Dt)^2,
	\end{equation}
	 we obtain the following error bounds for each component: 
	\begin{align*}
		& |\bR^{x_k,i}_{E,p,n}| \leq C |\sigma(x_k, u(t_n))|^4 (\Dt)^2. 
	\end{align*}
Similarly, we obtain the following estimate for  $|\bR^{x_k, ij}_{E,q,n}|$, where $ij$ means the $(i,j)th$ component of $\bR^{x_k}_{E,p,n}$. 
\begin{equation}
	|\bR^{x_k,ij}_{E,q,n}| \leq C |\sigma(x_k, u(t_n))|^4 (\Dt)^{5/2}+ C|\sigma(x_k, u(t_n))|^3 (\Dt)^2.
\end{equation}

	By taking the discrete expectation on both sides, using Theorem \ref{lip} and the assumption that $ \sigma$ is Lipschitz in $x$, we obtain
	 $$\hat{\E}[\sigma(x_{t_n}, u(t_n))^4] \leq \hat{\E}[C*|1+ x_{t_n}|^4] \leq C.$$
	 Thus, by combining components in $\bR_{E,p,n}$, and $\bR_{E,q,n}$,  we conclude that 
	 $$\hat{\E}[|\bR_{E,p,n}|^2]=\Or((\Dt)^4), \ \hat{\E}[|\bR_{E,q,n}|^2]=\Or((\Dt)^4).$$
	 
	To study the integration error $\tilde{\bR}^{x_k}_{p,n}= \E^{x_k}_{t_{n+1}} [\bp_{t_{n+1}}]-\E^{x_k}_{t_{n+1}}[\tilde{\bp}_{t_{n+1}}]$, we notice that the approximation error comes from the  the approximation of the forward SDEs.
	  
	Hence, keeping time $t=t_{n+1}$ fixed and using the multidimensional It\^o formula twice on $\bp ^i$ (the $i$-th component of $\bp$), we have the following:
	\begin{align} 
		\E^{x_k}_{t_n}[\bp^i_{t_{n+1}}] &=\bp^{i,x_k}_{t_{n+1}} +\Dt \mathcal{L}\bp^i (t_{n+1}, x_k) + \int_{t_n}^{t_{n+1}}  \int_{t_n}^{s} \E^{x_k}_{t_n}[\mathcal{L}\mathcal{L} \bp^i (t_{n+1},x_r)]dr ds,\label{quasie}\\
		\E^{x_k}_{t_n}[\tilde{\bp}^i_{t_{n+1}}] &=\bp^{i,x_k}_{t_{n+1}} +\Dt \tilde{\mathcal{L}}\bp^i (t_{n+1}, x_k) + \int_{t_n}^{t_{n+1}}  \int_{t_n}^{s} \E^{x_k}_{t_n}[\tilde{\mathcal{L}}\tilde{\mathcal{L}} \bp^i (t_{n+1},x_r)]dr ds.\label{quasie:2}
	\end{align}
	In the first equation, the state dynamics of $x_k$ has the form
	$$dx_t = b(x_s, u(s))ds + \sigma(x_s, u_s) dW_s,$$
	while the second equation takes the form:
	$$d\tilde{x}_s = b(\tilde{x}_{t_n}, u_{t_n}) ds + \sigma(\tilde{x}_{t_n}, u_{t_n}) dW_s,$$
and the generators $\mathcal{L}$, $\tilde{\mathcal{L}}$ corresponding to $x_t$ and $\tilde{x}_t$ are defined as follows: for any $u \in C^2(\R^d)$,
	\begin{align}
		\mathcal{L} u^k (x_r) &= \sum_{i} b(x_r, u(r))_i^T \partial_i u^k (x_r) + \frac{1}{2} \sum_{i,j} [\sigma(x_s, u_s) \sigma(x_s, u_s)^T]_{ij} \partial_{ij} u^k,\\
		\tilde{\mathcal{L}} u^k (\tilde{x}_r) &= \sum_{i} b(\tilde{x}_{t_n}, u(t_n))_i^T \partial_i u^k (\tilde{x}) + \frac{1}{2} \sum_{i,j}[\sigma(\tilde{x}_{t_n}, u_{t_n}) \sigma(\tilde{x}_{t_n}, u_{t_n})^T]_{ij} \partial_{ij} u^k.
	\end{align}

We can see that first order terms in (\ref{quasie}) and \eqref{quasie:2} are the same, and so the error is of order $\Or ((\Dt)^2)$. And by following exactly the same argument, we conclude that $\tilde{\bR}^{x_k,ij}_{q,n}$ is also of order $\Or ((\Dt)^2)$, and the same error order extends to the vector $\tilde{\bR}^{x_k}_{p,n},\tilde{\bR}^{x_k}_{q,n}$ since it holds component-wise. 
	
	Finally, the discretization error can be easily seen to have error of order $\Or ((\Dt)^2)$ which follows the standard argument of taking It\^o expansion followed by taking the conditional expectation $\E_{t_n}^{x_k}[\cdot]$. We have: 
	\begin{align}
		\bar{\bR}_{p,n}^{x_k} &= \int_{t_n}^{t_{n+1}}  \int_{t_n}^{s} \mathbb{E}^{x_k}_{t_n}[\mathcal{L}^0 \bar{f}]   dt ds, 
			\end{align}
where $\bar{f}(t,x)=f(x,\bp(t,x),\bq(t,x),u(t))$, and we let 
	\begin{align}
		\mathcal{L}^0 \bar{f}_l(t,x)&=\partial_t \bar{f}_l + \sum_i b_i \partial_i \bar{f}_l + \frac{1}{2} \sum_{ij}[\sigma \sigma^T]_{ij} \partial_{ij} \bar{f}_l, \nonumber  \\ 
		\mathcal{L}_k^1 \bar{f}_l(t,x)&= \sum_i \sigma_i \partial_i \bar{f}_l.
	\end{align}
Also, for $\bar{\bR}_{q,n}^{x_k}$, we have
	\begin{align}
		\bar{\bR}_{q,n}^{x_k} &=\int_{t_n}^{t_{n+1}}  \mathbb{E}^{x_k}_{t_n}[  \big( \bar{f} - \bar{f} (t_n, x^k) \big) \Delta W^T_{n+1} ds   - \mathcal{L}^0 \boldsymbol{\xi} ds ] \nonumber   \\
		&= \int_{t_n}^{t_{n+1}}  \mathbb{E}^{x_k}_{t_n}[\big(\int^s_{t_n}  \mathcal{L}^0\bar{f}dt + \mathcal{L}^1 \bar{f}dW_s \big) \Delta W^T_{n+1} - \mathcal{L}^0 \boldsymbol{\xi} ds ] \nonumber \\
		 &=\int_{t_n}^{t_{n+1}}  \int_{t_n}^{s} \mathbb{E}^{x_k}_{t_n}[\mathcal{L}^0 \bar{f} \Delta W^T_{n+1} + \mathcal{L}^1 \bar{f}- \mathcal{L}^0 \boldsymbol{\xi} ] dt ds,
	\end{align}
	where we have used It\^o Isometry and the fact that $\int_t^{t_{n+1}} dW_s $ is independent of rest of the terms in the conditional expectation. Then, we can derive
	$$ |\bar{\bR}_{p,n}^{x_k}|^2 \sim \oo((\Dt)^4), \quad |\bar{\bR}_{q,n}^{x_k}|^2 \sim \oo((\Dt)^4). $$
		
	In conclusion, we have
	$$\hE[|\bR_{I,p,n}|^2]  \sim \Or(h^4), \ \hat{\E}[|\bR_{E,p,n}|^2] \sim \Or((\Dt)^4),  \ \hE[|\tilde{\bR}_{p,n}|^2]\sim \Or((\Dt)^4), \  \hE[|\bar{\bR}_{p,n}|^2] \sim  \Or((\Dt)^4),  $$
	$$\hE[|\bR_{I,q,n}|^2]  \sim \Or(h^4), \ \hat{\E}[|\bR_{E,q,n}|^2] \sim \Or((\Dt)^4),  \ \hE[|\tilde{\bR}_{q,n}|^2]\sim \Or((\Dt)^4), \  \hE[|\bar{\bR}_{q,n}|^2] \sim  \Or((\Dt)^4).  $$
	Then by \eqref{pde_exact}, putting all the terms together, and recall that $\oo(N^{-1}) \sim \oo (\Dt)$  we conclude that 
	$$\frac{1}{\Delta t} \sum_{n=0}^{N-1} \hat{E}[|\bR_{p,n}|^2+|\bR_{q,n}|^2] = \mathcal{O}((\Delta t)^2) + \mathcal{O}(h^4/(\Dt)^2).$$
\end{proof}

With the convergence analysis for the BSDE, we can derive the following error estimates, in which the last assertion $||u^*-u^{N,i}|| \sim \oo(\Delta t)$ gives the desired result for control. We point out quickly that it utilizes the convergence result from Theorem \ref{thm1}. We give a sketch of the proof below. The technical details can be found in \cite{main}, Theorem 4.6.

\begin{theorem}{\label{approximation}}
Under the standard assumptions for $b, \sigma$, and the assumptions in the previous lemmas, the following error estimates hold
	\begin{align}
		\hat{\E}[|\bmu_n|^2] + \Dt \sum_{n=0}^{N-1}\hat{\E}[|\bnu_n|^2]= \mathcal{O}(\Dt^2) + \mathcal{O}(h^4/(\Dt)^2), \label{key_res1} \\
		\sup_i ||J'(u^{N,i})-J_N'(u^{N,i}) ||=\mathcal{O}(\Dt) + \mathcal{O}(h^2/\Dt). \label{eorder}
	\end{align}
\end{theorem}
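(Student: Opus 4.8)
The plan is to establish the two estimates in sequence: \eqref{key_res1} follows almost mechanically from the two preceding lemmas, and then \eqref{eorder} is obtained by feeding \eqref{key_res1} together with the operator estimate of Lemma~\ref{operator_error} into the decomposition of the gradient. For \eqref{key_res1} I would insert the truncation bound of Lemma~\ref{rpq_estimate} into the telescoping inequality \eqref{Error_Truncations} of Lemma~\ref{telescope}, so that
\[
\hE[|\bmu_n|^2] + \Dt \sum_{n=0}^{N-1} \hE[|\bnu_n|^2] \leq C\, \hE[|\bmu_N|^2] + \frac{C}{\Dt}\sum_{n=0}^{N-1}\hE[|\bR_{p,n}|^2+|\bR_{q,n}|^2],
\]
where the last sum is $\oo((\Dt)^2)+\oo(h^4/(\Dt)^2)$. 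The only remaining point is the terminal term: since the scheme sets $\bp_N^{x_k}=g(x_k)$ exactly at the nodes, $\bmu_N$ vanishes on $X$, and $\hE[|\bmu_N|^2]$ retains only the $\oo(h^4)$ interpolation error of the smooth datum $g$, which is absorbed into the already-present $\oo(h^4/(\Dt)^2)$ term.

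For \eqref{eorder} I would exploit that $J'(u),J_N'(u)\in L^2([0,T];\R^{d_1})$ and work with the time-$L^2$ norm, $\|J'(u)-J_N'(u)\|^2 \approx \Dt\sum_{n=0}^{N-1}|J'(u)|_{t_n}-J_N'(u)|_{t_n}|^2$; this is what lets the $\bnu_n$ contributions enter only through the \emph{summed} quantity $\Dt\sum_n\hE[|\bnu_n|^2]$ that \eqref{key_res1} controls (a pointwise-in-$n$ bound for $\bnu_n$ is not available). Recalling \eqref{cont_grad} and \eqref{project1}, the $j'_u$ terms cancel and each nodal difference splits as
\begin{align*}
J'(u)|_{t_n}-J_N'(u)|_{t_n}
&= \big(\E-\hE\big)\big[(\bp_{t_n})^\dagger b'_u + \mathrm{tr}((\bq_{t_n})^\dagger \sigma'_u)\big] \\
&\quad + \hE\big[(\bp_{t_n}-\bp_n)^\dagger b'_u + \mathrm{tr}((\bq_{t_n}-\bq_n)^\dagger \sigma'_u)\big],
\end{align*}
an \textbf{operator error} (Part A) plus a \textbf{solution error} (Part B).

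Part B is the easy half: boundedness of $b'_u,\sigma'_u$ and Cauchy--Schwarz give $|\text{Part B}|\le C\,\hE[|\bmu_n|+|\bnu_n|]$, and after squaring, summing against $\Dt$, and applying \eqref{key_res1} (using $\Dt\sum_n\hE[|\bmu_n|^2]\le T\max_n\hE[|\bmu_n|^2]$ for the $\bmu$ piece), Part B contributes $\oo((\Dt)^2)+\oo(h^4/(\Dt)^2)$ to the squared norm, i.e. $\oo(\Dt)+\oo(h^2/\Dt)$ to the norm. For Part A I would apply Lemma~\ref{operator_error} with $\phi$ the (smooth, by the standing regularity hypotheses) integrand, writing the error as the telescoping sum $\sum_{i=0}^{n-1}\hE[\hat{R}_{\Phi,i}]$ of \emph{single-step} errors; each $\hat{R}_{\Phi,i}$ bundles a weak-Euler error, a Gauss--Hermite quadrature error, and an $I_h$-interpolation error, of per-step orders $\oo((\Dt)^2)$, $\oo((\Dt)^2)$ and $\oo(h^2)$ respectively, exactly as quantified in the proof of Lemma~\ref{rpq_estimate}. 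Summing $n\le N\sim 1/\Dt$ such steps yields $\oo(N(\Dt)^2)+\oo(Nh^2)=\oo(\Dt)+\oo(h^2/\Dt)$. Combining A and B and taking $\sup_i$ gives \eqref{eorder}.

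The hard part will be Part A, specifically showing that the single-step interpolation error accumulates only \emph{linearly} in the number of time layers, so that $N$ copies of an $\oo(h^2)$ error produce $\oo(h^2/\Dt)$ rather than something amplified after propagation through the $n$-fold nested operator $\hE$. This is precisely where the non-amplification built into Lemma~\ref{assumptions} is needed: the non-negativity $\Phi_k\ge 0$ together with the normalization $\sum_k\Phi_k\le 1$ makes $\hE$ non-expanding in the sup norm, so $|\hE[\hat{R}_{\Phi,i}]|\le\|\hat{R}_{\Phi,i}\|_\infty$ and the layers do not compound. The companion subtlety is uniformity: all constants must be controlled uniformly over $\omega\in\mathcal C$ and over the iterates $u^{i,N}\in U_N$, which is what justifies passing from a per-iterate bound to the $\sup_i$ in \eqref{eorder} and, via the remark following Theorem~\ref{thm1}, to the stated control $\|u^*-u^{i,N}\|\sim\oo(\Dt)$.
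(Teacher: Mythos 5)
Your proposal is correct and follows essentially the same route as the paper: \eqref{key_res1} is obtained by inserting Lemma \ref{rpq_estimate} into Lemma \ref{telescope}, and \eqref{eorder} by splitting each nodal difference into an operator error, handled via the telescoping identity of Lemma \ref{operator_error} with per-step bounds as in Lemma \ref{rpq_estimate}, plus a solution error, handled via Cauchy--Schwarz, boundedness of $b'_u,\sigma'_u$, and \eqref{key_res1}. The only step you gloss over with ``$\approx$'' --- the gap between the time-$L^2$ norm and the nodal sums --- is treated in the paper by an It\^o/generator expansion of $\psi_t$ contributing $\oo((\Dt)^2)$, which is within budget; your extra remarks on the terminal term $\hE[|\bmu_N|^2]$ and on the sup-norm non-expansiveness of $\hE$ are sound refinements that the paper leaves implicit.
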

Then, if $ \oo(h) \sim \oo(\Dt)$, the following relations holds: 
$$\sup_i ||J'(u^{N,i})-J_N'(u^{N,i}) ||= \mathcal{O}(\Dt), \ ||u^*-u^{N,i}|| = \mathcal{O}(\Dt), \ i \rightarrow \infty. $$
\begin{proof}
	\eqref{key_res1} follows immediately from Lemma \ref{telescope} and \ref{rpq_estimate}. 
		By assumption, $u \in U_N$ which is piecewise constant. For convenience, we define $\psi$ to be the following which is assumed to be piecewise $C^{1,4}([0,T),\R)$: 
	\begin{align*}
		\psi_t&=\bp_t b'_u(x_t,u(t)) + tr\big(\bq^\dagger_t \sigma_u'(x_t,u(t))\big)+j'(u(t)), \\
		\psi_n^k&=\bp^k_n b'_u(x_k,u(t_n)) +tr\big((\bq^k_n)^\dagger \sigma_u'(x_k,u(t_n)) \big)+j'(u(t_n)).
	\end{align*}
	Recall that 
	$$J'_N(u)|_{t_n}=\hat{\E}[\psi_n], J'(u)|_t = \E[\psi_t]. $$
	Now we start the following error estimates: 
	\begin{align*}
		||J'(u)-J'_N(u)||^2 		& \leq C \Big ( \sum_{n=0}^{N-1} \int_{t_n}^{t_{n+1}} |J'(u)-J'(u)|_{t_n}|^2 dt  + |J'(u)|_{t_n}-J_N'(u)|_{t_n}|^2 dt \Big ) \\
				& \leq  C \Delta t \sum_{n=0}^{N-1} \int_{t_n}^{t_{n+1}} \int^t_{t_n} |\E[\mathcal{L}(\psi_r)]|^2  dr  dt + C \Delta t \sum_{n=0}^{N-1} |\E[\psi_{t_n}]- \hat{\E}[\psi_n]|^2\\
		& \leq C (\Delta t)^2 +  C \Delta t \sum_{n=0}^{N-1} |\E[\psi_{t_n}]- \hat{\E}[\psi_{t_n}]|^2+ C \Delta t \sum_{n=0}^{N-1} |\hat{\E}[\psi_{t_n}]- \hat{\E}[\psi_{n}]|^2 \\ 
		& \leq C (\Delta t)^2  + C ( h^4/(\Delta t)^2) + C \Delta t \sum_{n=0}^{N-1}  \hat{\E}[|\bmu_n|^2 + |\bnu_n|^2] \\
		&= \oo( (\Delta t)^2)+ \oo((h)^4/(\Delta t)^2 ).
	\end{align*}
Thus \eqref{eorder} holds. Hence, the last statement follows from 	\eqref{eorder}, Theorem \ref{thm1} and the remark that follows.
\end{proof}

\section{Numerical Demonstration}\label{Numerics}
In this section we demonstrate effectiveness and efficiency of our meshfree approximation method for stochastic optimal control problems, and we shall solve the same problem with different choices of dimensions, i.e. $d=2,3,4$. 
For $d=2$ we demonstrate that both the MLS and RBF methods achieve the first order convergence.  
For $d=3$, we compare the classical tensor grid polynomial approximation with the RBF method and show that the latter is much more efficient in terms of both accuracy and computational efficiency. To further demonstrate the performance of our method, we run the example again in the case $d=4$.  
In all the experiments, we will take the tolerance for control to be $\epsilon=10^{-3}$, the number of Monte Carlo samples will be $5\times10^4$.

\vspace{0.25em}

\noindent \textbf{Problem setup. }\textit{
 	The cost functional is given by 
 	\begin{equation}\label{problem_cost}
 		J(u)=\frac{1}{2} \int_0^T \sum^d_{i=1}\E[(y^i-y^*)^2 ]dt +\frac{1}{2} \int_0^T u^2(t) dt , 
 	\end{equation}
 	The forward process is given by 
 	\begin{equation}\label{diff_eqn}
 		d y^i(t) = u(t) y^i(t) dt + \sigma^i y^i(t) dW_t \ , \ \  i=1,\cdots,d.
 	\end{equation}
 	And one needs to find $u^* \in U$ such that 
 	\[
 	J(u^*)=\min_{u\in U} J(u),
 	\]
 	where $u^*$ is the optimal control of this problem. 
}

\noindent \textbf{Exact solutions.}
We will pick two specific functions for $y^*$ and study the numerical solutions of the optimal control $u^*_t$ and compare them to the exact solutions. 
\begin{enumerate}[\textbf{Case }$1$.]
	\item In this case, the function $y^*$  and its corresponding optimal control $u^*$ are given as following 
	\begin{align}
		y^* &= \frac{1}{d}(1 -\frac{(t-T)^2}{\frac{1}{y_0} - Tt +\frac{t^2}{2}}+ \frac{\sum^d_{i=1}e^{\sigma_i^2 t}}{\frac{1}{y_0} - Tt +\frac{t^2}{2}} ), \label{problem_11} \\ 
		u^*&=\frac{T-t}{\frac{1}{x_0} - T t +\frac{t^2}{2}}.  \label{problem_12}
	\end{align}
	\item In this case, the function $y^*$  and its corresponding optimal control $u^*$ are given as following.
	\begin{align}
		y^* &= \frac{1}{d}(\frac{\sum^d_{i=1} e^{\sigma_i^2 t} -(e^{-T}-e^{-t})^2}{\frac{1}{y_0}+1 -e^{-t} -e^{-T}t} -e^{-t} ), \label{problem_21} \\
		u^* &= \frac{e^{-T}-e^{-t}}{\frac{1}{y_0}+1 -e^{-t} -e^{-T}t}. \label{problem_22}
	\end{align}
\end{enumerate}

\vspace{0.2em}

\begin{remark}
In this problem, the control process $u_t$ is deterministic and it is one dimensional. The constant diffusion term is a $d \times d $ diagonal matrix. 
Even though $u_t$ is chosen to be one dimensional, the problem still demonstrates all the difficulties in multidimensional control problems. And this is because the dynamics of $y^1(t),...y^d(t)$ all show up in the the running cost in equation \eqref{problem_cost}. 
\end{remark}

\subsection{d=2}
By Theorem \ref{approximation}, to achieve error of $\oo(\Dt)$ one needs to pick $h \sim \Dt$ that is, the fill distance is on the same scale as $\Dt$.
Hence, we take the number of spatial points to be $N^2$. 

In all of the control plots below, blue dots stand for the discrete control values obtained by using our numerical methods and the red curve is the exact solution. In the error decay log-log plots, blue dots stand for the numerical errors and the red straight line has slope 1. 
\begin{enumerate}[\textbf{Case} $1)$]
	\item We let $y_0=0.5, T=1.0, \sigma_1=0.1, \sigma_2 =0.15$, and numerical results are given in Figure \ref{case1:d2}. Since the approximation is found to be of high accuracy by using only 11-21 temporal points, we study the error decay by using $N=9, 11, 13, 16, 19,21$ temporal points.  
 The corresponding number of spatial points are taken to be equal to $N^2$. We can see from this figure that our methods accurately captured the real optimal control, and the error decay is actually better than first order. Both subplots for control accuracy are done by using $N=21$ points. 
	\item We still let $y_0=0.5, T=1.0, \sigma_1=0.1, \sigma_2 =0.15$, and numerical results are given in Figure \ref{case2:d2}. We pick $N=11, 16, 21, 26, 31,36$ temporal points with $N^2$ spatial points to study the convergence rate, and the control accuracy results are graphed by using $N=36$ temporal points.
 From this figure, one may observe that our methods give very accurate approximations for the optimal control, and both MLS and RBF provide first order convergence rate.  
\end{enumerate}

\begin{figure}[h!]
\center
  \begin{subfigure}[b]{0.4\textwidth}
    \includegraphics[width=\textwidth]{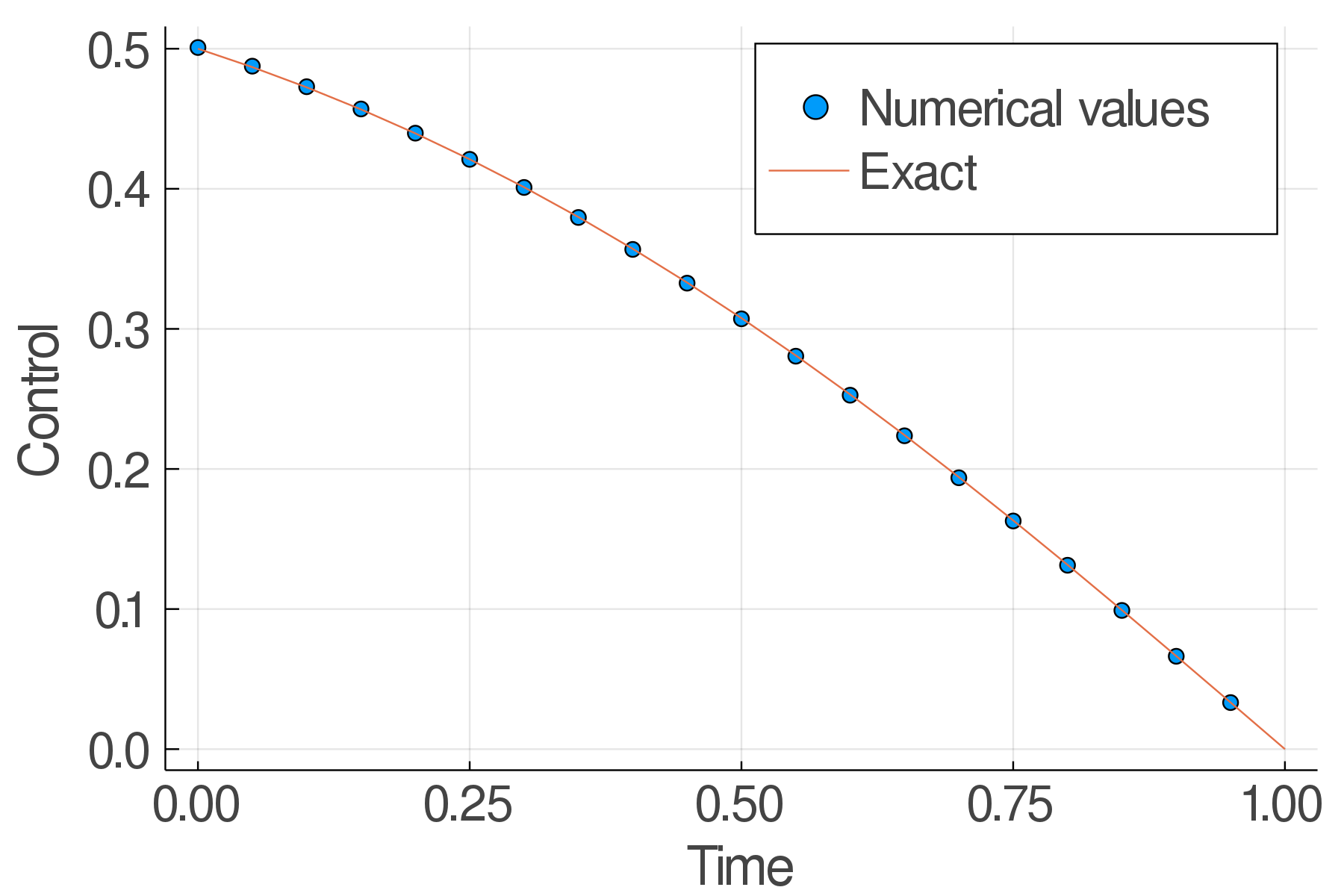}
    \caption{2D Control: Case 1 (RBF)}
    \label{fig:1}
  \end{subfigure}
  \begin{subfigure}[b]{0.4\textwidth}
    \includegraphics[width=\textwidth]{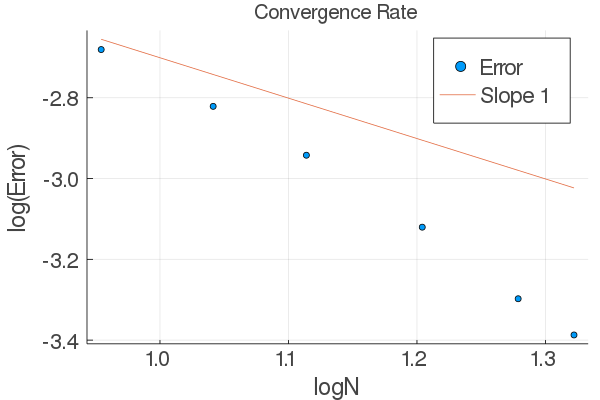}
    \caption{2D Error Decay: Case 1 (RBF)}
    \label{fig:2}
  \end{subfigure}
\center
  \begin{subfigure}[b]{0.4\textwidth}
    \includegraphics[width=\textwidth]{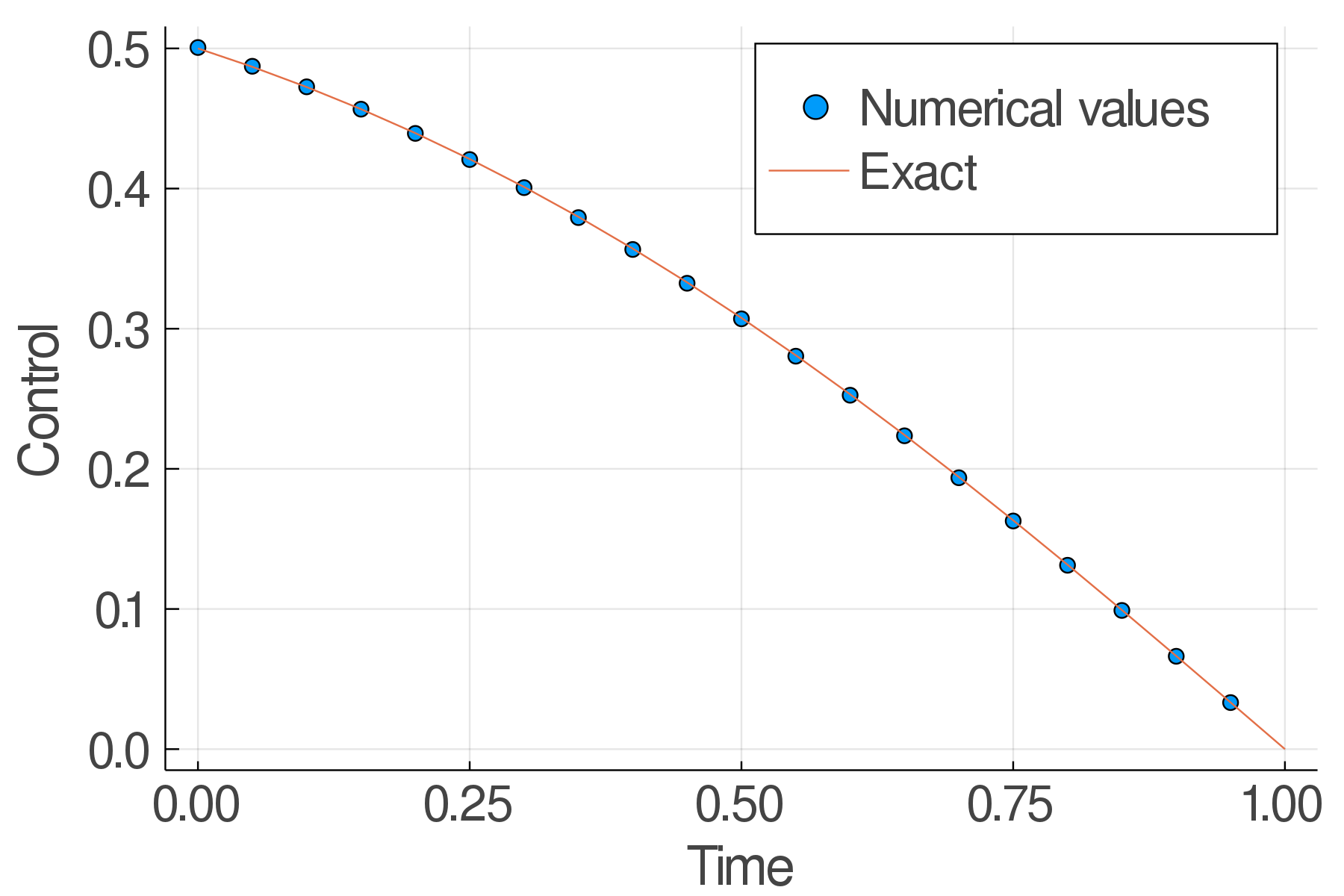}
    \caption{2D Control: Case 1 (MLS)}
    \label{fig:3}
  \end{subfigure}
  \begin{subfigure}[b]{0.4\textwidth}
    \includegraphics[width=\textwidth]{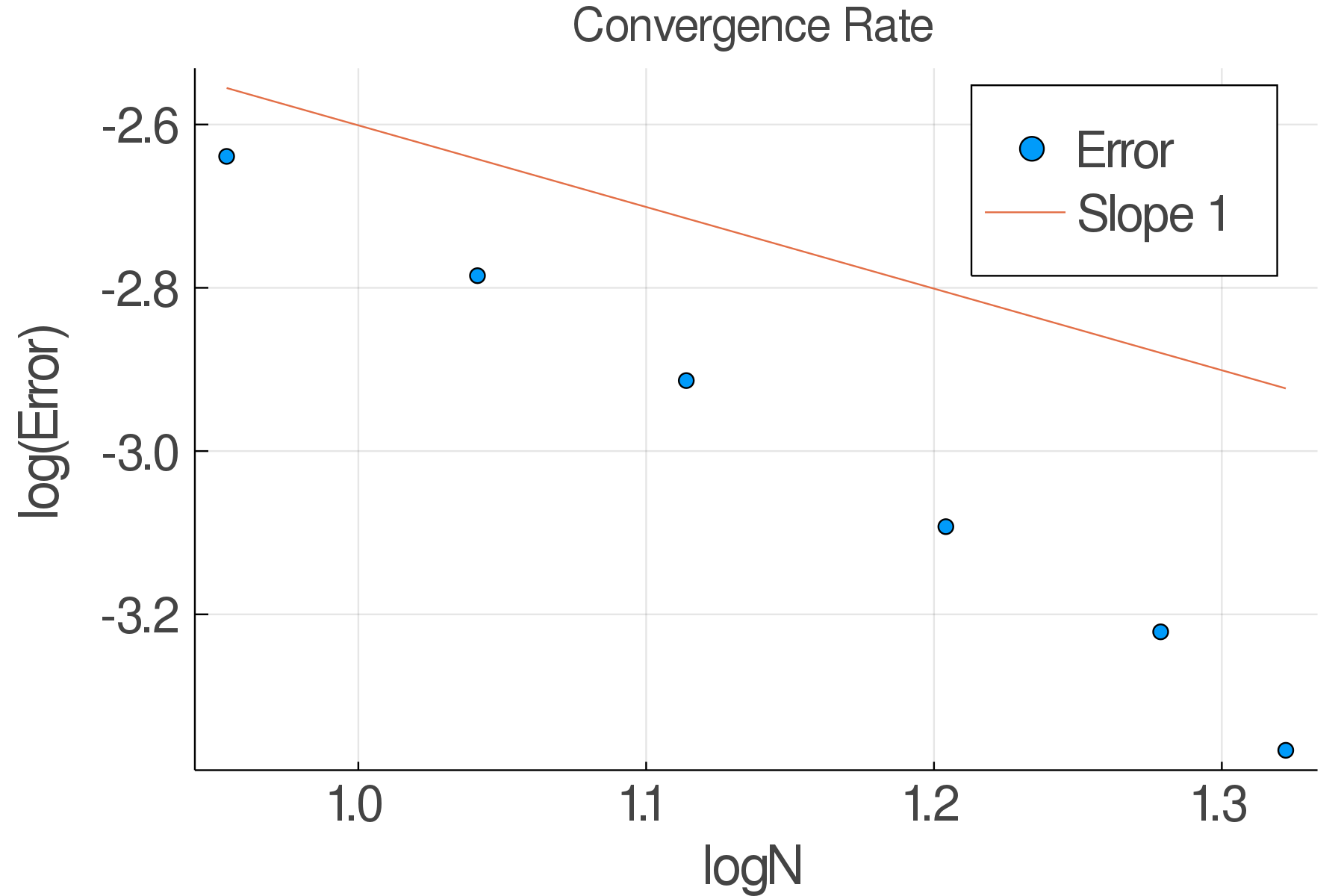}
    \caption{2D Error Decay: Case 1 (MLS)}
    \label{fig:4}
  \end{subfigure}
  \caption{Numerical results for case 1, $d=2$}\label{case1:d2}
\end{figure}

\begin{figure}[h!]
\center
  \begin{subfigure}[b]{0.4\textwidth}
    \includegraphics[width=\textwidth]{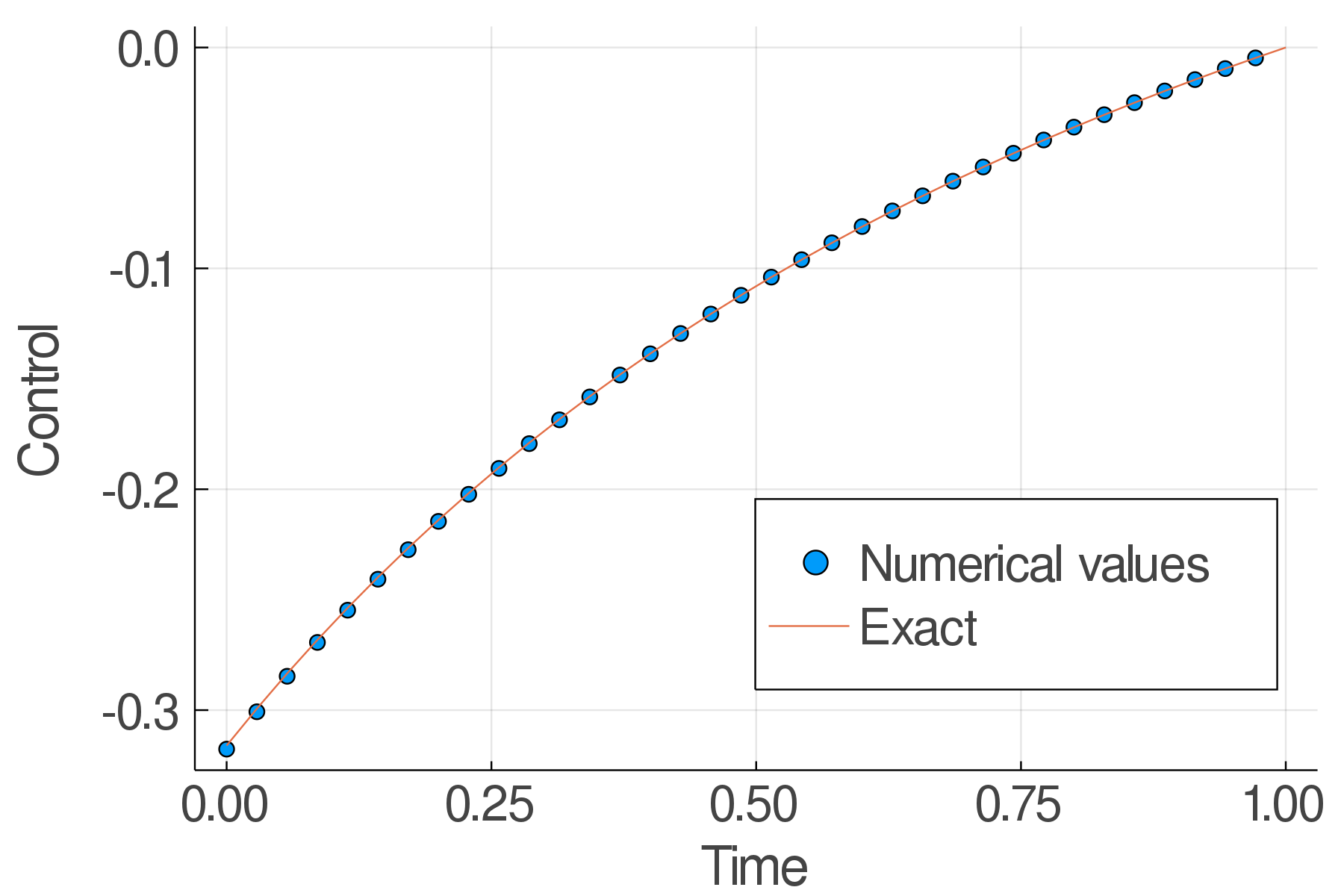}
    \caption{2D Control: Case 2 (RBF)}
    \label{fig:21}
  \end{subfigure}
  \begin{subfigure}[b]{0.4\textwidth}
    \includegraphics[width=\textwidth]{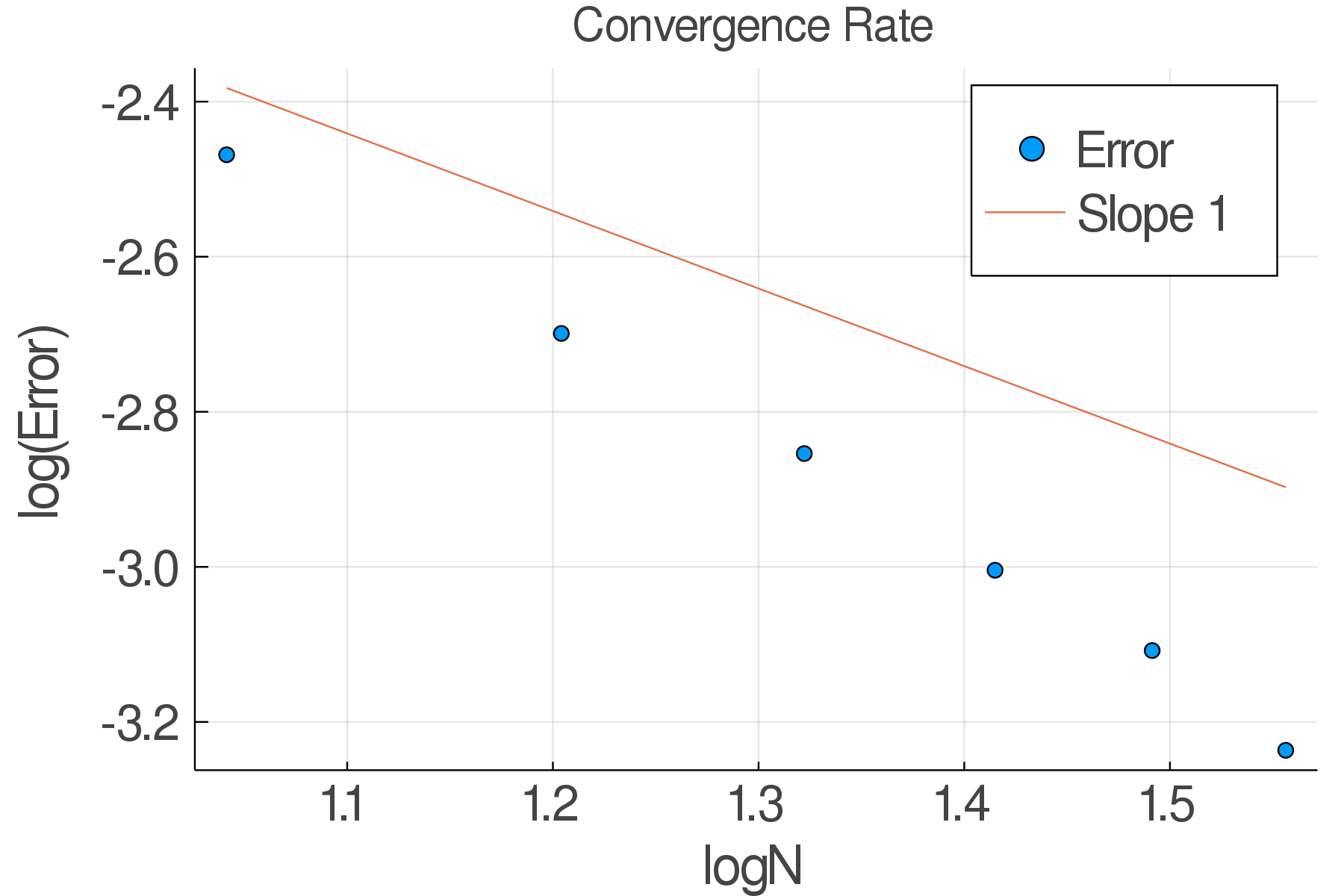}
    \caption{2D Error Decay: Case 2 (RBF)}
    \label{fig:22}
  \end{subfigure}
\center
  \begin{subfigure}[b]{0.4\textwidth}
    \includegraphics[width=\textwidth]{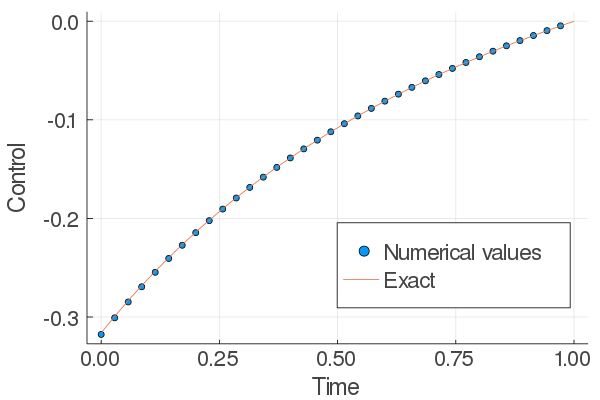}
    \caption{2D Control:  Case 2 (MLS)}
    \label{fig:23}
  \end{subfigure}
  \begin{subfigure}[b]{0.4\textwidth}
    \includegraphics[width=\textwidth]{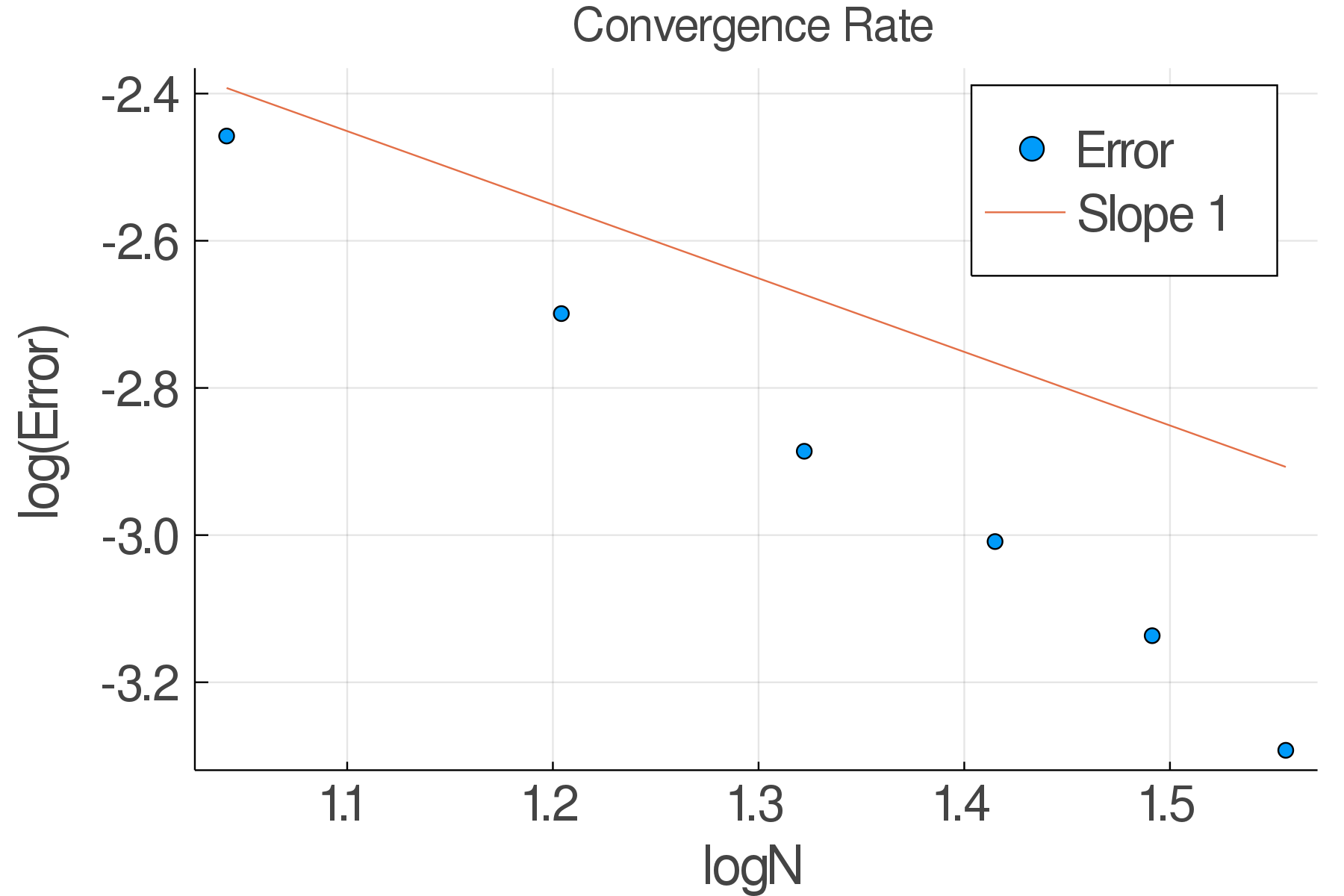}
    \caption{2D Error Decay:  Case 2 (MLS)}
    \label{fig:24}
  \end{subfigure}
  \caption{Numerical results for case 2, $d=2$}\label{case2:d2}
\end{figure}

\subsection{d=3}
In this 3D experiment, we show that the meshfree approximation approach is more efficient than the classic polynomial interpolation method, and we choose the RBF as our meshfree interpolation method. 
For both cases in this numerical demonstration, we set $\sigma_1=0.1, \sigma_2=0.15, \sigma_3=0.2$ and $y_0 =0.5$. Also, for the tensor product polynomial interpolation method, we choose $9$ spatial points in each dimension ($729$ points in total), and we use $216$ Halton points for the RBF method. 

In Figure \ref{d3}, we present the estimation performance for optimal controls, where the cross marks are the numerical results obtained by using the polynomial interpolation method (trilinear was used) and dots stand for the RBF method. In both cases the runtime of the trilinear method is roughly $420$ seconds which is larger than the $130$ seconds runtime of the RBF method. 
We can observe that the RBF method outperforms polynomial approximation in case 1, and case 2 is not very sensitive to the method that we used. 

To make the advantage of the meshfree method more pronounced, we will examine those two cases in dimension four. 

\begin{figure}[h!]
\center
  \begin{subfigure}[b]{0.4\textwidth}
    \includegraphics[width=\textwidth]{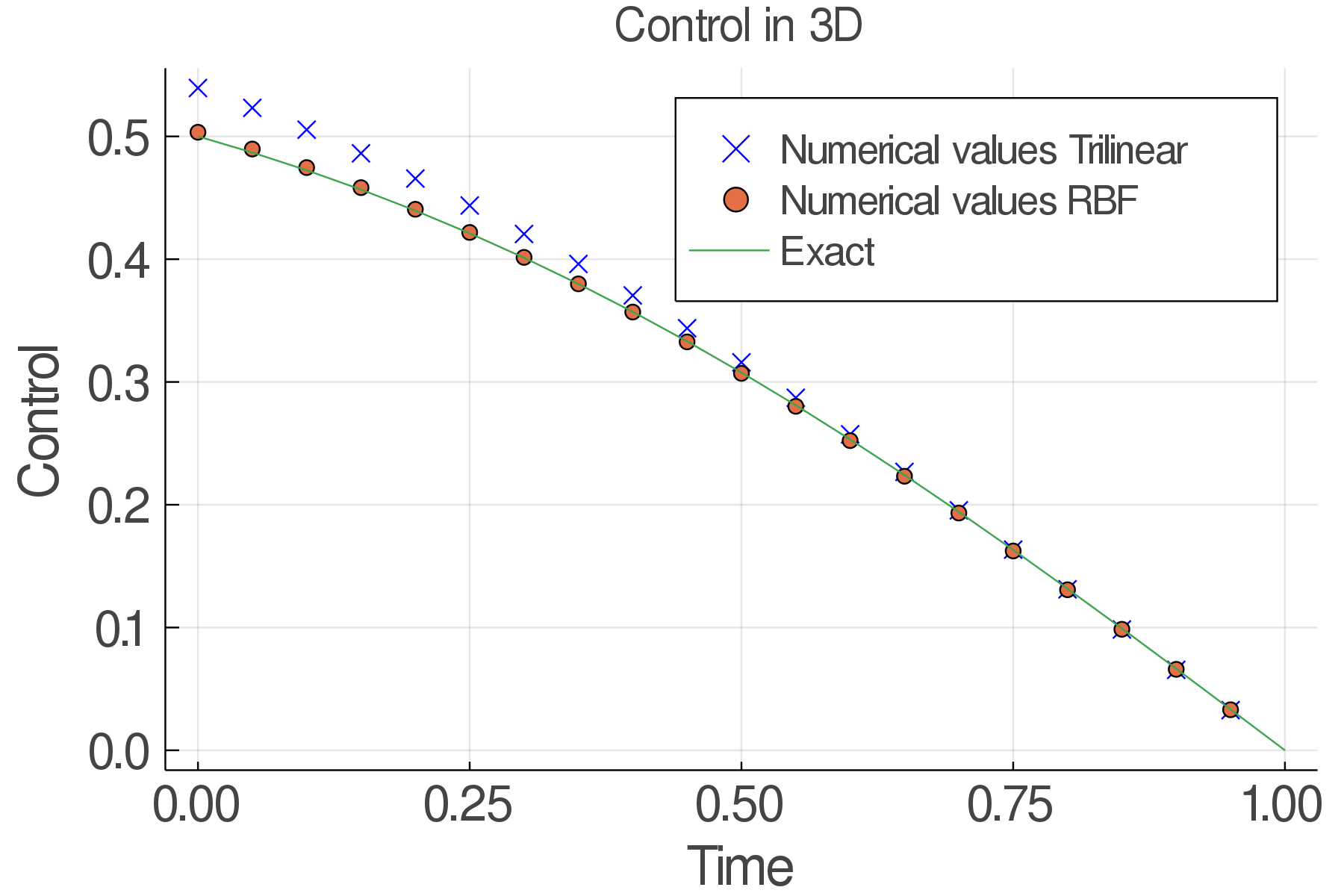}
    \caption{3D Control: Case 1}
    \label{fig:1}
  \end{subfigure}
  \begin{subfigure}[b]{0.4\textwidth}
    \includegraphics[width=\textwidth]{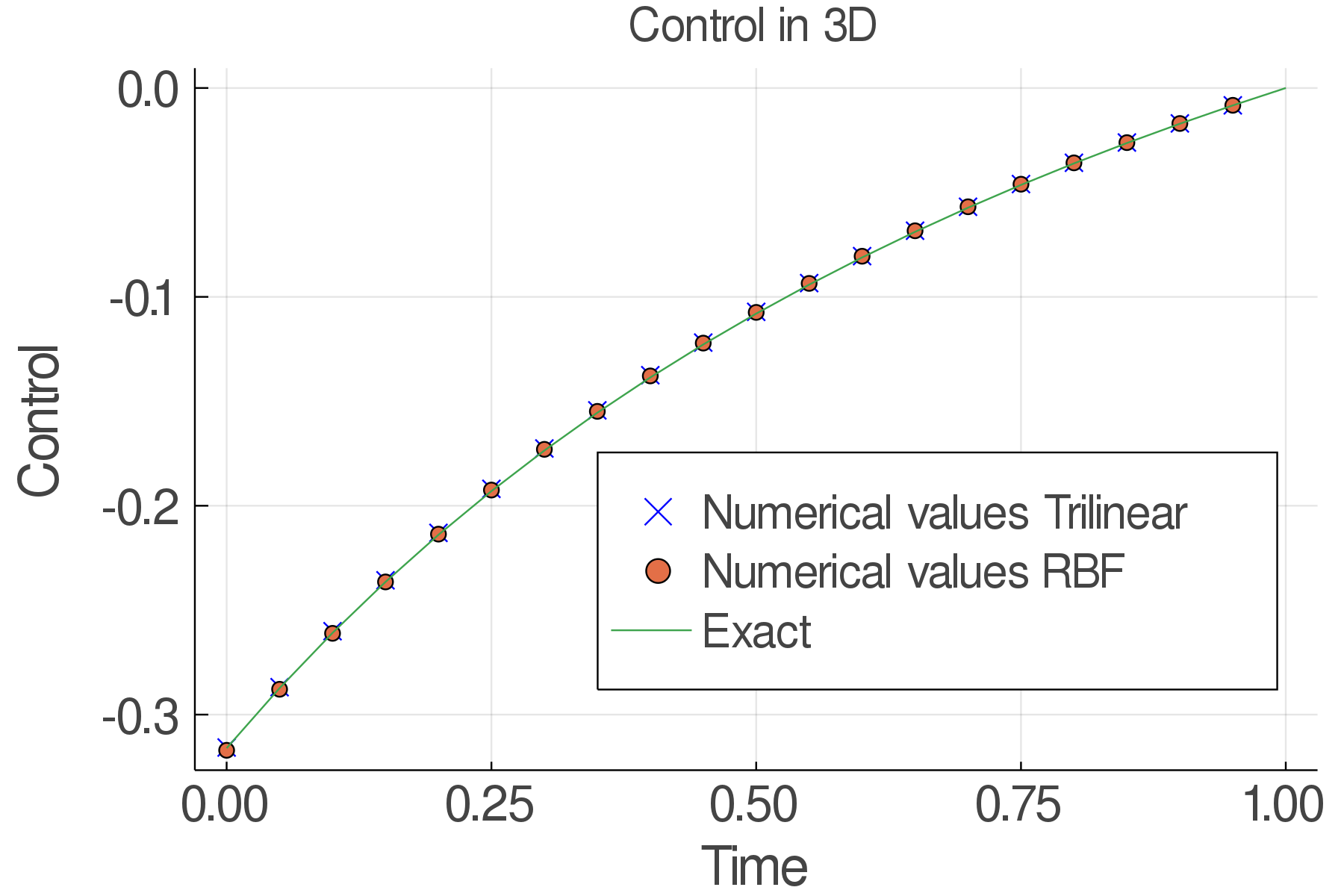}
    \caption{3D Control: Case 2}
    \label{fig:2}
  \end{subfigure}
    \caption{Numerical results for $d=3$}\label{d3}
\end{figure}

\subsection{d =4 }
In this 4D experiment, we set $\sigma_1=0.1, \sigma_2=0.15, \sigma_3=0.2, \sigma_4=0.25$ and $y_0=0.5$ in both cases.
\begin{figure}[h!]
\center
  \begin{subfigure}[b]{0.4\textwidth}
    \includegraphics[width=\textwidth]{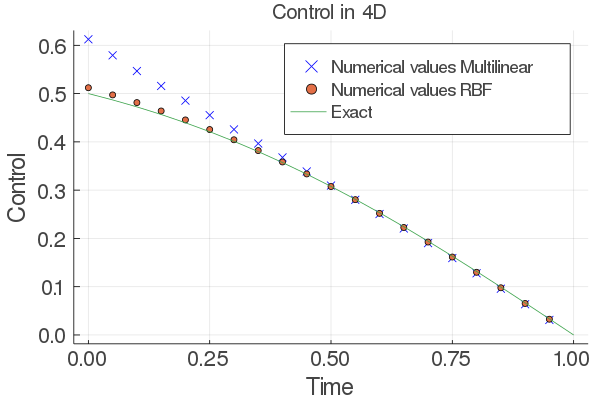}
    \caption{4D Control: Case 1 }
    \label{fig:1}
  \end{subfigure}
  \begin{subfigure}[b]{0.4\textwidth}
    \includegraphics[width=\textwidth]{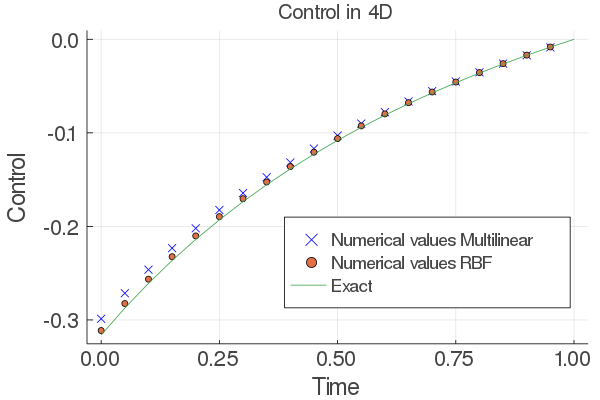}
    \caption{4D Control: Case 2}
    \label{fig:2}
  \end{subfigure}
  \caption{Numerical results for $d=4$}\label{d4}
\end{figure}
For the RBF method, we take $N=21$ temporal points and $216$ spatial points. The runtime for both cases are around $1100$ seconds. For the polynomial approximation method, we also take $N=21$, and we choose $5$ spatial points in each dimension, i.e. altogether $625$ spatial points. The runtime for both cases turns out to be around $2800$ seconds. 
In Figure \ref{d4}, we present the estimation performance for optimal controls. In both subplots, the blue crosses are the numerical results obtained by using the polynomial interpolation method, the red dots are estimates obtained by using the RBF method, and the green curve is the exact optimal control. From this figure, we can see that the RBF constantly outperforms the polynomial interpolation in accuracy -- especially in case 1, and the computational time for the RBF method is less than half of what the polynomial interpolation method costs. This indicates both accuracy and efficiency of our meshfree approximation method.

\end{document}